\newtheorem{thm}{Theorem}[section]
\newtheorem{lemma}[thm]{Lemma}
\newcommand{\bdd}{\mbox{$\partial$}}
\def\R{{\mathbb R}}
\begin{document}  

\title{Proposed Property 2R counterexamples classified}   

\author{Martin Scharlemann}
\address{\hskip-\parindent
        Martin Scharlemann\\
        Mathematics Department\\
        University of California\\
        Santa Barbara, CA USA}
\email{mgscharl@math.ucsb.edu}

\thanks{Research partially supported by National Science Foundation grants.}

\date{\today}

\begin{abstract} In 1985 Akbulut and Kirby \cite{AK} analyzed a homotopy $4$-sphere $\Sigma$  that was first discovered by Cappell and Shaneson \cite{CS}, depicting it as a potential counterexample to three important conjectures, all of which remain unresolved.  In 1991, Gompf's further analysis \cite{Go} showed that $\Sigma$ was one of an infinite collection of examples, all of which were (sadly) the standard $S^4$, but with an unusual handle structure. 

In \cite{GST} it was shown that the construction gives rise to a family $L_n$ of $2$-component links, each of which remains a potential counterexample to the generalized Property R Conjecture.  In each $L_n$ one component is the simple square knot $Q$ and it was argued that the other component, after handle-slides, could in theory be placed very symmetrically.  How to accomplish this was unknown, and that question is resolved here, in part by finding a symmetric construction of the $L_n$.  In view of the continuing interest and potential importance of the Cappell-Shaneson-Akbulut-Kirby-Gompf examples (e. g. the original $\Sigma$ is known to embed very efficiently in $S^4$ and so provides unique insight into proposed approaches to the Schoenflies Conjecture) digressions into various aspects of this enhanced view are also included.   \end{abstract}

\maketitle

In 1987 David Gabai, demonstrating the power of his newly-developed sutured manifold theory, settled the long-sought Property R Conjecture \cite{Ga}:

\begin{thm}[Property R] \label{thm:PropR} If $0$-framed surgery on a knot $K \subset S^3$ yields $S^1 \times S^2$ then $K$ is the unknot.
\end{thm}

The conjecture had been motivated in part by a simple question:  Could a non-standard homotopy $4$-sphere be built using just a single $1$-handle and a single $2$-handle?  Gabai's theorem established that it could not, but it then begs two more general questions: 
\begin{itemize}
\item Is a homotopy $4$-sphere built without $3$-handles necessarily standard?  This is a generalization because such a manifold has (a single $0$-handle and) the same number of $2$-handles as $1$-handles. 
\item If it is always standard, can one show this by simple handle-slides, without introducing additional pairs of canceling handles?
\end{itemize}

In the second, stronger form, this is the Generalized Property R Conjecture.  usually stated \cite[Problem~1.82]{Ki2}, as a conjecture about links: If surgery on an $n$-component link $L$ yields the connected sum $\#_nS^1\times S^2$ then $L$ becomes the unlink after suitable handle slides.   \cite{GST} focused on the the hunt for a counter-example in the simplest possible case. In an argument that can be traced back 30 years to \cite{AK}, it  is argued that a counter-example is likely to arise even among those $2$-component links in which one component is simply the square knot $Q$.  Here is a brief review:
\bigskip

Let $\mathcal{L}$ denote the set of  of all two-component links that contain the square knot $Q$ and on which some surgery yields $(S^1 \times S^2) \# (S^1 \times S^2)$.   A combination of sutured manifold theory and Heegaard theory provides  a natural classification of $\mathcal{L}$, up to handle slides over $Q$.  In brief: for any link $Q \cup K \subset \mathcal{L}$,  handle-slides of $K$ over $Q$ will eventually transform $K$ to a knot that lies in the standard genus $2$ Seifert surface $F$ of $Q$, and furthermore its position in $F$ is very constrained: viewing $F$ as the $2$-fold branched cover of a $4$-punctured sphere $P$, $K$ must be a homeomorphic lift of an embedded circle in $P$.  There is a natural way of parameterizing  embedded circles in $P$ by the rationals; with one such parameterization, a circle in $P$ lifts homeomorphically to $F$ if and only if the corresponding rational has odd denominator. 

A frustrating aspect of this classification is that each handle-slide of $K$ over $Q$ requires the choice of a band over which to slide, and the classification does not give a prescription for finding the relevant bands.  So there is no direct way to see how a given link in $\mathcal{L}$ fits into the classification.  In particular, it is also argued in \cite{GST} that  there is an infinite family $L_n \subset \mathcal{L}$ of links that probably do not satisfy the Property 2R Conjecture, but we were unable to resolve how the family $L_n$ fits into the classification of $\mathcal{L}$.  This left a puzzling gap  (\cite[Question One]{GST}) between the $4$-dimensional Kirby-calculus arguments which gave rise to interest in $L_n$, and the $3$-dimensional sutured manifold arguments which were used to classify $\mathcal{L}$.  Here we resolve that question by showing that each $L_n$ corresponds to the slope $\frac{n}{2n+1} \in \mathbb{Q}$. 
\bigskip

The family $L_n = Q \cup V_n$ (called $L_{n, 1}$ in \cite{GST}) has these central features:  
 \begin{itemize}
 \item If $L_n$ satisfies the generalized Property R Conjecture then a certain well-known group presentation could be trivialized by Andrews-Curtis moves, an outcome that seems very unlikely when $n \geq 3$.  
 \item  On the other hand, $L_0$ can be handle-slid to become the unlink. (This is demonstrated in  \cite[Figures 12, 13, 5]{GST}.)
 \item  Let $M$ denote the $3$-manifold obtained from $S^3$ by $0$-framed surgery on $Q$.  There is a torus $T \subset M$ and simple closed curve $\alpha \subset T$ so that each $V_n$ intersects $T$ twice and Dehn twisting $V_n$ in $M$ at $T$ along the slope given by $\alpha$ converts $V_n$ to $V_{n+1}$. 
 \item When viewed in $S^3$, the framing of $\alpha$ given by its annular neighborhood in $T$ is $\pm 1$.  
 \end{itemize}  
 
It is further shown that, given these last two features, $L_{n+1}$ can be obtained from $L_n$ by handle-slides and the introduction and use of a single canceling Hopf pair.  It follows inductively that, {\em with the introduction of a canceling Hopf pair}, each $L_n$ can be handle-slid to the unlink.    In particular, $0$-framed surgery on each $L_n$ gives $(S^1 \times S^2) \# (S^1 \times S^2)$, and the corresponding homotopy $4$-sphere is standard, because its handle structure can be trivialized with the introduction of a cancelling $2$- and $3$-handle pair.  
\bigskip

Here is an outline of the paper:  Section \ref{sect:explicit} is aimed at filling a pictorial gap in \cite{GST}, namely the transition from \cite[Figure 12d]{GST} (itself derived from \cite[Figure 1]{Go}) to the full-blown explicit example \cite[Figure 1]{GST}.   Section \ref{sect:push} begins the process of moving (perhaps by slides) the component $V_n$ of $L_n$ onto the Seifert surface $F$ of the square knot $Q$. The modest goal here is to provide a highly symmetric picture, in which $F$ is clearly displayed, with $V_n$ nearby.   In order to complete the move of $V_n$ onto $F$, Section \ref{sect:hex} changes the viewpoint: instead of viewing $Q$ and $V_n$ in $S^3$, we see how $V_n$ lies in the fibered manifold $M$ obtained by doing $0$-framed surgery on $Q$, and move it onto a fiber.  This accomplished, Section \ref{sect:classification} describes how the resulting link fits into the classification of $\mathcal{L}$ developed in  \cite{GST}.   

The torus $T$ described above plays a crucial role in the construction of the $V_n$, but $T$ itself is difficult to see.  The rest of the paper aims for a clearer picture of $T$.   Section \ref{sect:alternate} provides an alternate view of of the construction of $V_n$, a view in which $T$ is more easily tracked.  This leads in Section \ref{sect:torus} to an explicit description of $T$ in the fibered manifold $M$.  This, in turn, leads in Section \ref{sect:present} to a pleasant description of how the Andrews-Curtis problematic presentation of the trivial group naturally arises from the construction of $L_n$.  In particular the relation $aba = bab$ in that presentation derives from the natural presentation of $\pi_1(D^4 - D_Q)$, where $D_Q$ is the ribbon disk that $Q$ bounds in $D^4$.  Finally, in Section \ref{sect:TinS3}, a fuller description of how $T$ appears in $S^3$ is given, one that includes an explicit picture of the slope in $T$ along which Dehn twisting changes $V_n$ to $V_{n+1}$.  
 \bigskip
 
 Many of the arguments in the paper are pictorial; some are likely to be indecipherable without viewing the full-color rendering in the on-line .pdf file.  

\section{From surgery diagram to explicit link}\label{sect:explicit}

We begin by explaining with pictures the transition in \cite{GST} from a fairly simple surgery diagram of a probable counterexample to Property 2R to an explicit picture of it as a complicated link in $S^3$.   Begin with  the surgery diagram Figure \cite[Figure 12d]{GST} and draw it symmetrically, a transition illustrated in Figure \ref{fig:prehexstep1}.  

Next consider the effect of blowing down the red circles in the surgery description labelled $[\pm 1]$.  Focus on the right circle and recall the easy fact (see \cite[Figure 5.18]{GS}) that blowing down the circle with label $[-1]$ is equivalent to taking the disk $D$ that it bounds and giving everything that runs through it a $+1$ (sic) twist.  In this case, part of what runs through $D$ are $n$ segments of a $0$-framed component coming from the $+n$ twist-box on the right.  Figure \ref{fig:prehexstep2} is meant to illustrate what happens, very specifically in the top row for $n = 2$.  Before twisting along $D$, move the $n=2$ punctures in $D$ so that as one moves clockwise around the disk, the punctures become more central.  The track of the $2$ strands going through $D$,  after the $+1$ twist around $D$, is shown in red.  It is then illustrated in blue how the result can be thought of as a $+1$ twist box placed around an $n$-stranded band that follows the original red circle that was blown down.  The result for general $n$ is shown next, and then this is applied to the given surgery diagram to give the link illustrated at the bottom of Figure \ref{fig:prehexstep2}.  The gray annuli are meant to represent $n$ parallel strands, much like the wide annuli that appear in \cite[Figure 1]{GST}.  We now discuss the transition to that figure.

\begin{figure}
     \labellist
\small\hair 2pt
\pinlabel  $n$ at 175 15
\pinlabel  $n$ at 555 95
\pinlabel  $-n$ at 15 75
\pinlabel  $-n$ at 325 65
\pinlabel  $[1]$ at 65 110
\pinlabel  $[1]$ at 370 120
\pinlabel  $[-1]$ at 210 95
\pinlabel  $[-1]$ at 500 120
\pinlabel  $0$ at 195 155
\pinlabel  $0$ at 205 130
\pinlabel  $0$ at 430 170
\pinlabel  $0$ at 430 140
\endlabellist
 \centering
    \includegraphics[scale=0.6]{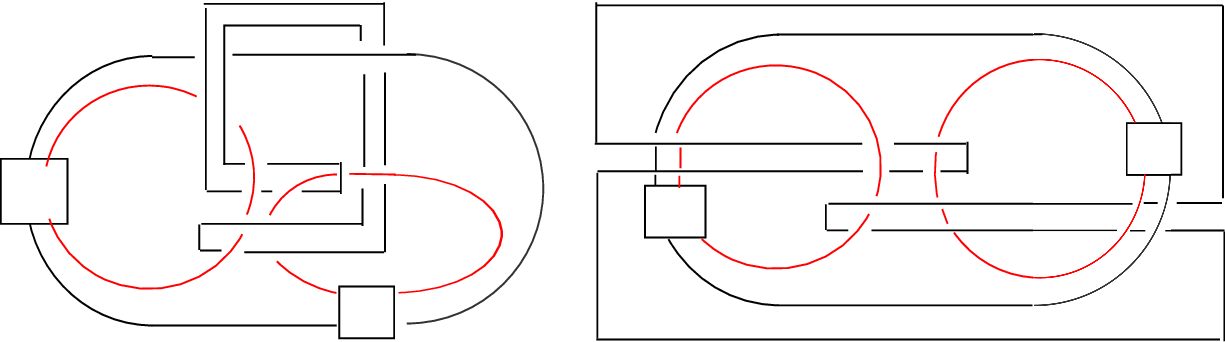}
    \caption{} \label{fig:prehexstep1}
    \end{figure}

\begin{figure}
     \labellist
\small\hair 2pt
\pinlabel  $2$ at 18 343
\pinlabel  $n$ at 23 215
\pinlabel  $n$ at 275 265
\pinlabel  $[-1]$ at 5 240
\pinlabel  $[-1]$ at 0 380
\pinlabel  $[-1]$ at 70 380
\pinlabel  $+1$ at 283 332
\pinlabel  $+1$ at 278 220
\pinlabel  $+1$ at 260 90
\pinlabel  $-1$ at 35 65
\endlabellist
 \centering
    \includegraphics[scale=0.8]{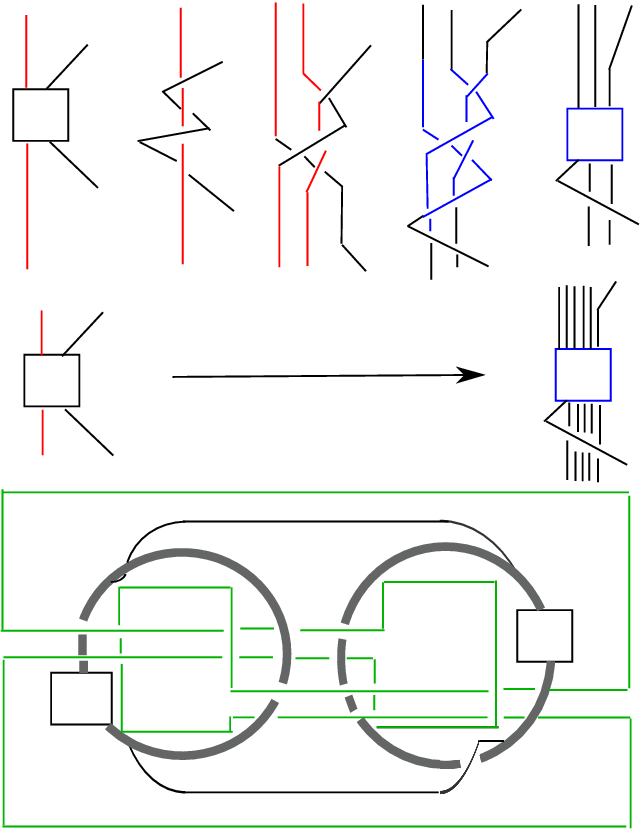}
    \caption{} \label{fig:prehexstep2}
    \end{figure}

In Figure \ref{fig:prehexstep5a}, the gray annuli are pushed off the plane of the green knot (now visibly the square knot), then the twist boxes are moved clockwise past one end of the two (black) connecting arcs, switching a crossing between the arc and the annulus to which it is connected.  Finally, the two gray annuli are isotoped to push the parts containing the twist boxes to the outside of the figure, beginning to imitate their positioning in \cite[Figure 1]{GST}.

\begin{figure}
 \centering
    \includegraphics[scale=0.8]{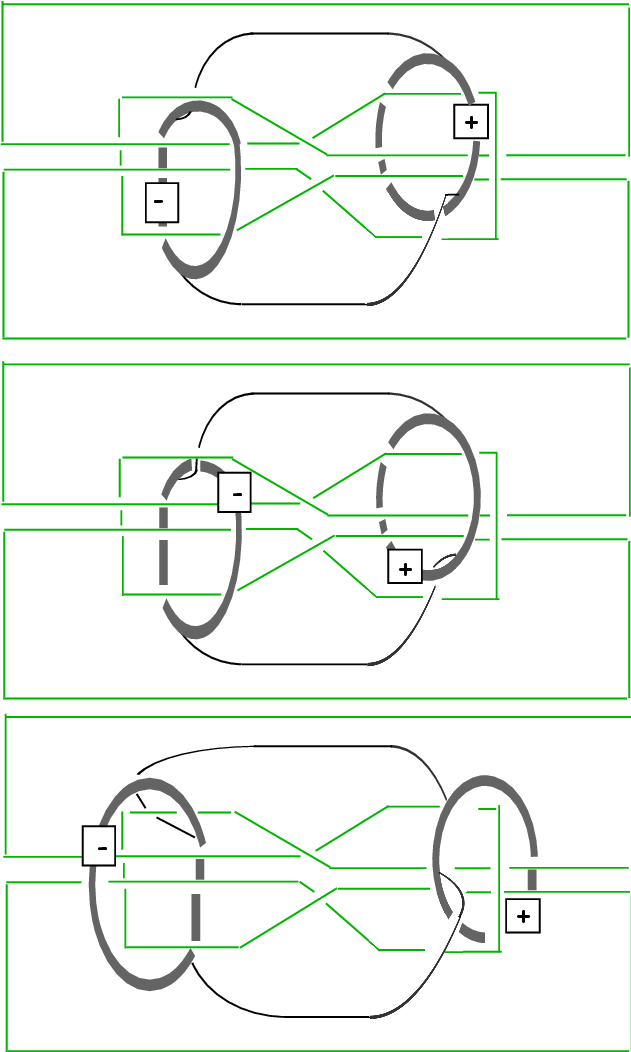}
    \caption{} \label{fig:prehexstep5a}
    \end{figure}

Continue with the positioning, moving clockwise from the upper left in Figure \ref{fig:prehexstep5b}.  The first step there is to move the square knot into the correct position and then expand the two gray annuli.  Next the ends of the connecting arcs on each annuli are moved to be adjacent (represented by the small blue squares in the figure).  This has the psychological advantage that the link component $V_n$ can be thought of as starting with a single circle (the end-point union of the two black arcs in the figure) and then doing a $\pm n$ Dehn twist to that circle along the cores of the two gray annuli.  The next move is a surprise: push the blue square in the right hand gray annulus clockwise roughly three-quarters around the annulus, pushing the twist-box ahead of it.  When this is done (the bottom left hand rendering) the picture of the link has become essentially identical to that in  \cite[Figure 1]{GST}.

\begin{figure}
 \centering
    \includegraphics[scale=0.6]{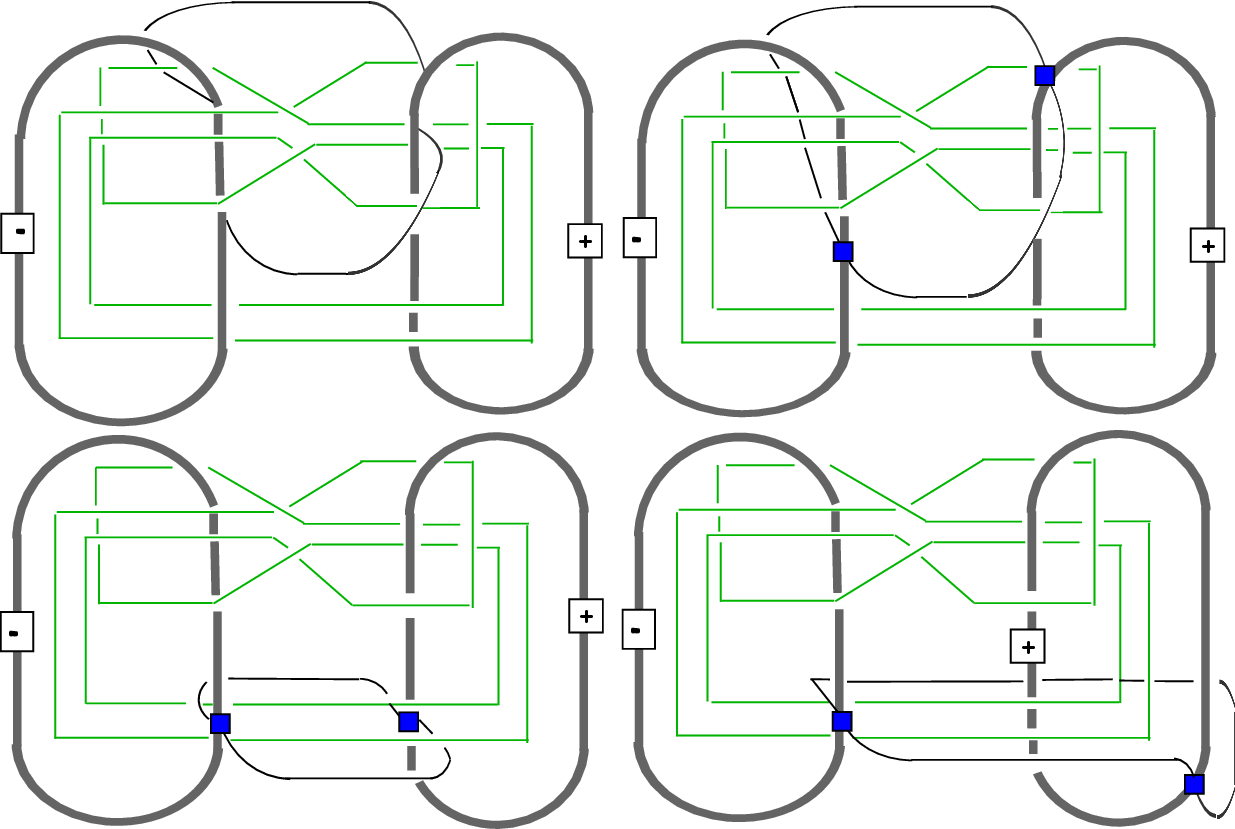}
    \caption{} \label{fig:prehexstep5b}
    \end{figure}

\section{Pushing $V_n$ onto a Seifert surface}\label{sect:push}

It follows easily from \cite[Corollary 4.2]{ST} that each $V_n$, perhaps after some handle-slides over $Q$, can be placed onto a standard Seifert surface of $Q$, so that the framing of $V_n$ given by the Seifert surface coincides with $0$-framing in $S^3$.  (For details, see the proof of \cite[Theorem 3.3]{GST}.)  The proof of \cite[Corollary 4.2]{ST} requires Gabai's deep theory of sutured manifolds.  It is non-constructive and in particular the proof provides no description of how to find the handle-slides of $V_n$ over $Q$ that are needed to place $V_n$ onto the Seifert surface.  

In this section we begin the search for the required handle-slides by trying to push the whole apparatus $A$ that defines $V_n$ onto a Seifert surface $F$ for $Q$. (Here $A$ is the union of the two gray annuli in Figure \ref{fig:prehexstep2} with the two arcs that connect them.)  It's immediately clear that $A$ can't be completely moved onto $F$, since each of the gray annuli in $A$ has non-trivial linking number with $Q$.  Here we focus on getting $A$ as closely aligned with $F$ as seems possible, given this difficulty.

\begin{figure}
\centering
\subfigure[]
{
  \includegraphics[scale=0.5]{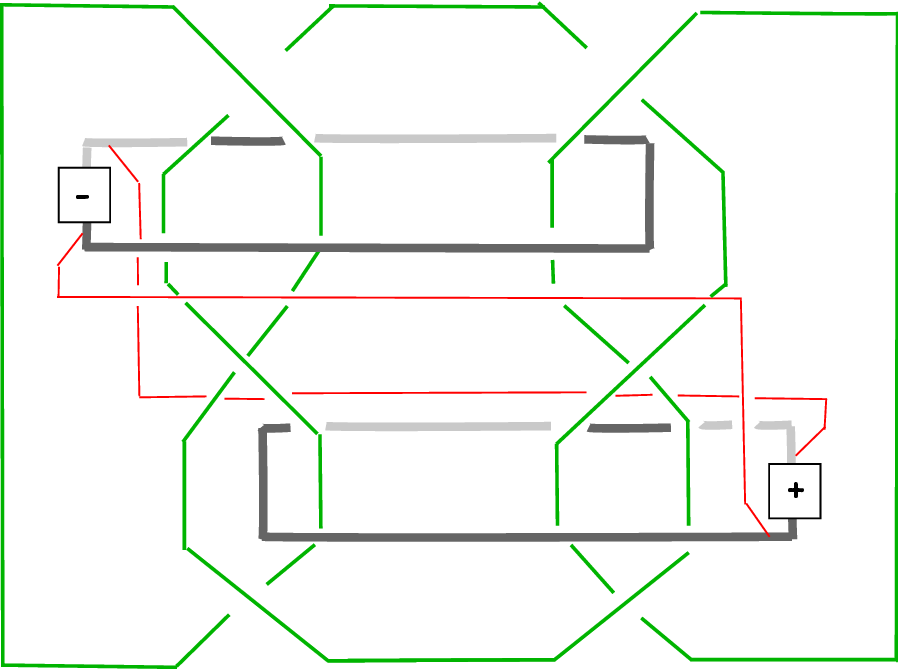} \label{fig:prehexsquare0}
}
\subfigure[]
{
 \includegraphics[scale=0.5]{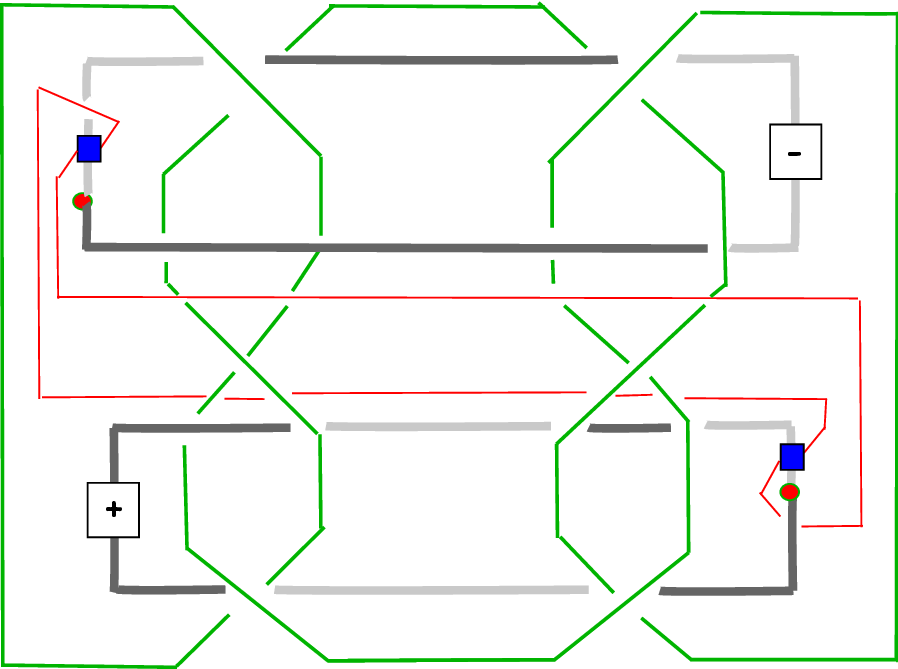} \label{fig:prehexsquare2}
}
\caption{}
\label{fig:prehexsquare02}
\end{figure}

Figure \ref{fig:prehexsquare02} repeats the first two steps in Figure \ref{fig:prehexstep5a}, with these minor modifications:
\begin{itemize}
\item  The square knot $Q$ is laid out in the classic fashion that emphasizes its Seifert surface: three rectangles in the plane of the page, with the middle rectangle joined to each of the side rectangles via three twisted bands. 
\item The Seifert surface  $F$ is not shown; instead, the annuli have been shaded to show where they lie in front of or behind $F$.  
\item In the lower figure, not only has each twist box been passed through an end  of one of the arcs (as was done in Figure \ref{fig:prehexstep5a}), but also the annuli have been stretched horizontally and the top subarc of the top annulus passed over the top subarc of $Q$ (and symmetrically at the bottom) so that much of each annulus lies right next to $F$.  
\item Since each gray annulus links $Q$ once, there is an intersection point (shown with a red dot in the lower figure) of each with $F$.  
\item The arcs connecting the annuli appear in red.
\end{itemize}

\begin{figure}
\centering
\subfigure[]
{
  \includegraphics[scale=0.5]{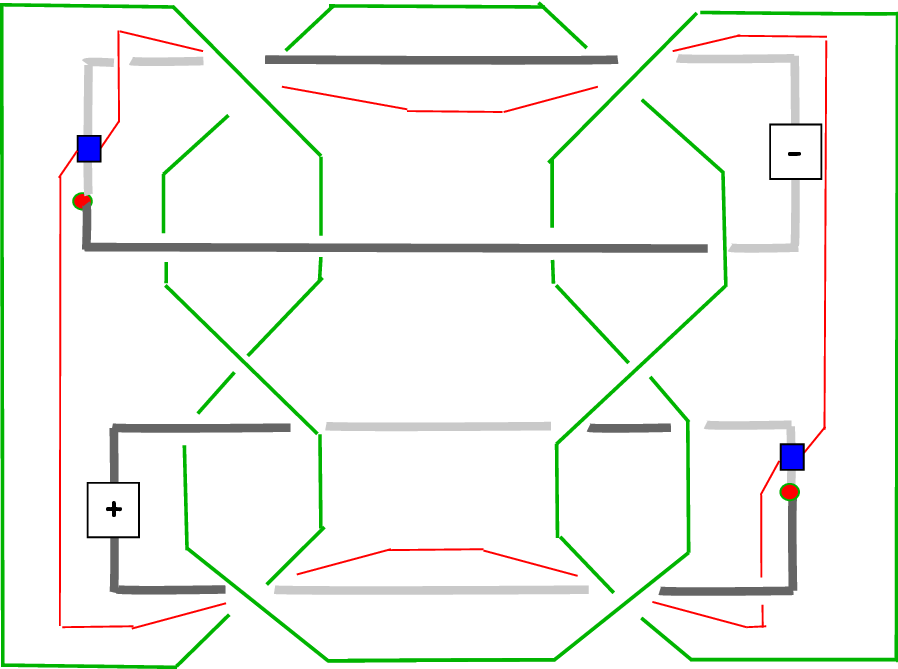} \label{fig:prehexsquare4}
}
\subfigure[]
{
 \includegraphics[scale=0.5]{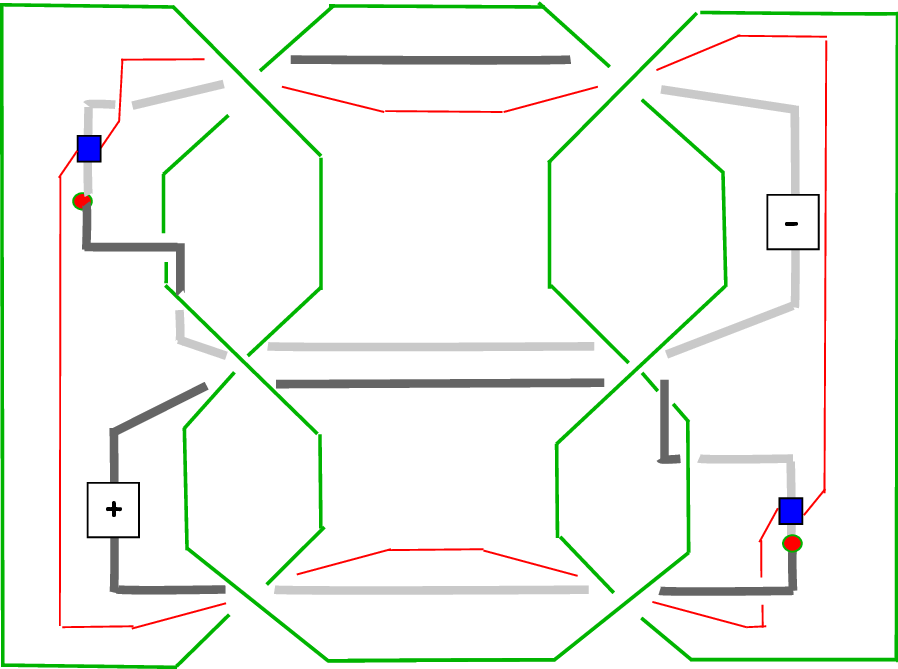} \label{fig:prehexsquare5}
}
\caption{}
\label{fig:prehexsquare45}
\end{figure}

Further progress is shown in Figure \ref{fig:prehexsquare45}: first the red arcs are moved mostly onto $F$ by passing one of them over the top (and the other under the bottom) of the center of $Q$.  (In fact, if the red arcs were not attached to the annuli at the blue 
squares, but set free to form their own circle, that circle could be moved entirely onto $F$ away from the annuli.)  Then the subarcs of the annuli at the center of the figure are moved to conform to $F$ as much as possible, at one point inevitably passing from the top of 
$F$, around $Q$ and then under $F$.  Each annulus conforms even more closely to $F$ if half of its full twist is absorbed into the point at which  the annulus is passed around $Q$, moving from the top of $F$ to the bottom.  The final result is shown in Figure \ref{fig:prehexsquare6}.  

Sadly, central features of Figure \ref{fig:prehexsquare6} do not offer much hope of further pushing $V_n$ onto $F$.  Most noticeably, the gray annuli which carry so much of $V_n$ have non-trivial linking number with $Q$, so there is no way that they can be moved intact onto the Seifert surface $F$.  Of course we are allowed to change $V_n$ by sliding it over $Q$, but this is very difficult to picture in $S^3$.  Further progress in pushing $V_n$ onto $F$ requires a good understanding of the fiber structure of the complement of $Q$ and the parallel fiber structure of the closed $3$-manifold $M$ obtained by $0$-framed surgery on $Q$.  That is the subject of the next section.

\begin{figure}
\centering
\labellist
\small\hair 2pt
\pinlabel  $-\frac12$ at 42 250
\pinlabel  $-\frac12$ at 97 200
\pinlabel  $\frac12$ at 315 108
\pinlabel  $\frac12$ at 385 53
\endlabellist
 \includegraphics[scale=0.7]{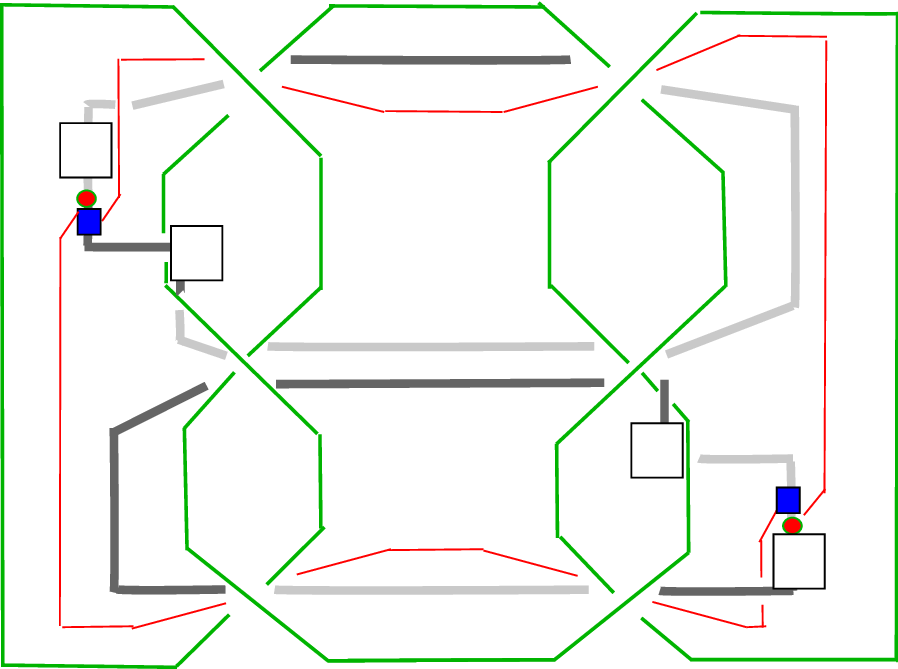}
\caption{} \label{fig:prehexsquare6}
\end{figure}

\section{The fiber structures of $S^3 - Q$ and $M$} \label{sect:hex}

For understanding the structure of the fibering of $S^3 - Q$, a good starting point is the fibering of the simple trefoil knot, for $Q$ is the sum of two of these. (In discussing knot complements, we'll suppress the distinction between a knot and its regular neighborhood in $S^3$.  Thus $S^3 - Q$ will be shorthand for the compact manifold with torus boundary that is the complement in $S^3$ of an open regular neighborhood of $Q$.)   

A nice account of this fibering is given in \cite[Section 3]{Z} beginning with Zeeman's modest: ``I personally found it hard to visualise 
how the complement of a knot could be fibered so beautifully, until I heard a talk by John Stallings on Neuwirth knots.''  

The classic description of the trefoil complement's fibering begins with viewing its punctured torus Seifert surface as two disks connected by three twisted bands.  (A possibly confusing feature:  the sign of the twist on the band is actually the opposite of the sign of the knot crossing, so, for example, the trefoil summand pictured on the left of $Q$ is a left-handed trefoil knot $Tr_-$, but the twists on the three bands in its Seifert surface $S_-$ are right-handed.)  The monodromy of $Tr_-$ cycles the three bands and exchanges the two disks, and so ends up being of order six.  At each iteration of the monodromy the knot itself is rotated a sixth of the way along itself.  A dual view of the monodromy will be more useful here:  pick one vertex in the center of each of the two disks in $S_-$ and connect these two vertices by three edges, each running through one of the twisted bands in $S_-$. (This is roughly shown Figure \ref{fig:posthexsquare1}.) If $S_-$ is cut open along this $\theta$ graph, the result can be viewed as a planar hexagon with a disk removed. The circle boundary of the removed disk corresponds to $Tr_-$, and $S_-$ itself can be recovered by identifying opposite sides of the hexagon, as in Figure \ref{fig:trefoilmon}.   The period six monodromy is then just a $\frac{\pi}3$ rotation of the punctured hexagon and we can view $S^3 - Tr_-$ as the mapping torus of $S_-$ under this monodromy.  If we were interested in the closed manifold obtained by $0$-framed surgery on $Tr_-$, the picture would be the same, but with the disk filled in.  

The square knot $Q$ is the connected sum of two trefoil knots $Tr_{\pm}$, one right-handed and the other left-handed. One way to see the monodromy of the Seifert surface $S_+$ of the other trefoil $Tr_+$ is to use Figure \ref{fig:trefoilmon}, but, in order to obtain the opposite orientation, first reflect the figure across the inner (circle) boundary component, so the 
hexagonal boundary lies on the inside and the knot boundary $Tr_+$ on the outside, as in Figure \ref{fig:trefoilminus}.   The complement of $Tr_+$ is then the mapping torus of $S_+$ under the monodromy shown in Figure \ref{fig:trefoilminus}.  

Since $Q$ is the connected sum of $Tr_{\pm}$ it is easy to see that $S^3 - Q$ can be obtained by gluing the manifolds $S^3 - Tr_+$ and $S^3 - Tr_-$ along a meridional annulus in the boundary of each.  A meridional annulus in the mapping torus picture of the knot complements is the mapping torus of a subinterval of $\bdd S_\pm$, so the result of gluing together the two knot complements along a meridional annulus of each is a fibering of $Q = Tr_+ \# Tr_-$ in which the fiber is the punctured genus $2$ surface $F = S_+  \natural  S_-$.  The monodromy acts on $F$ as shown in Figure \ref{fig:Qmon}.  The monodromy is a bit more complicated than it first appears (on the left in Figure \ref{fig:Qmon}): in order to make the monodromy preserve the boundary circle (corresponding to the knot $Q$) the $\frac{\pi}3$ rotation must be undone near the central circle, as shown on the right of Figure \ref{fig:Qmon}. This complication disappears in a description of the manifold $M$ however, because the boundary circle of $F$ is filled in with a disk, so $M$ is simply the mapping cylinder of the $\frac{\pi}3$ rotation shown in Figure \ref{fig:Mmon}.  The figure shows how the monodromy acts on the fiber $F_\cup = F \cup_{\bdd} D^2$ of $M$, obtained by filling in the boundary of $F$ with a disk.  

\begin{figure}
\centering
\subfigure[Structure of $S^3 - Tr_-$]
{
   \includegraphics[scale=0.3]{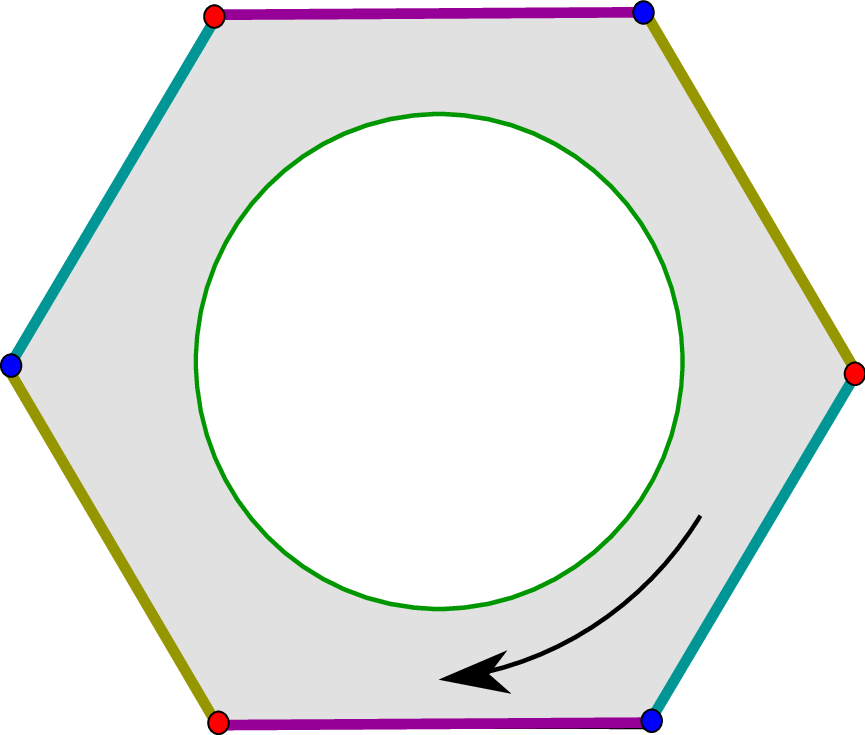}
   \label{fig:trefoilmon}
}
\subfigure[Of $S^3 - Tr_+$]
{
  \includegraphics[scale=0.35]{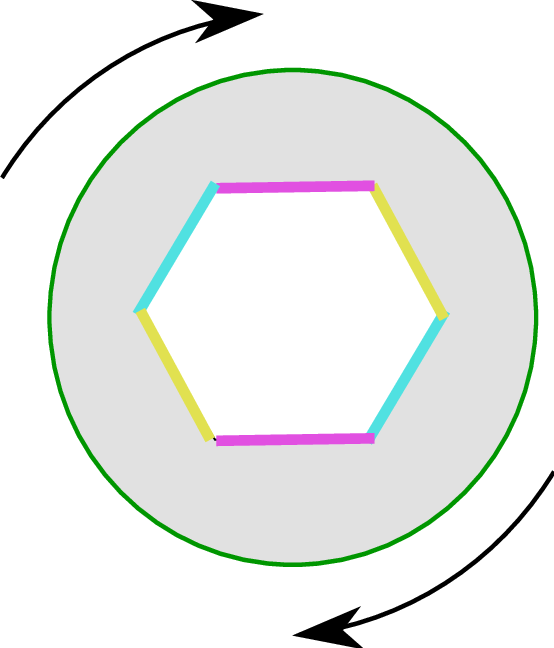}
   \label{fig:trefoilminus}
}
\caption{}
\label{fig:TreMmon}
\end{figure}

\begin{figure}
 \centering
    \includegraphics[scale=0.5]{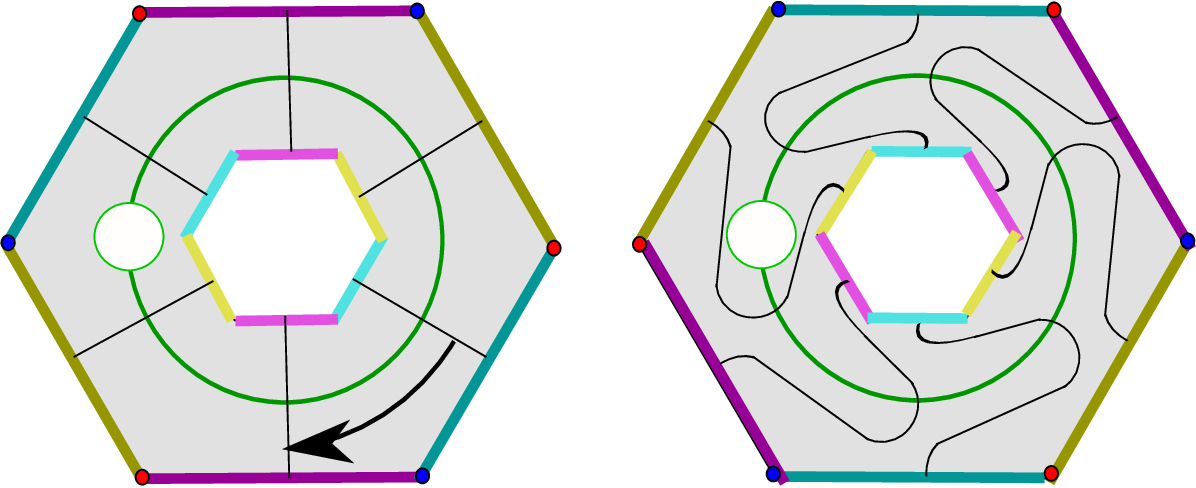}
    \caption{} \label{fig:Qmon}
    \end{figure}

\begin{figure}
 \centering
    \includegraphics[scale=0.4]{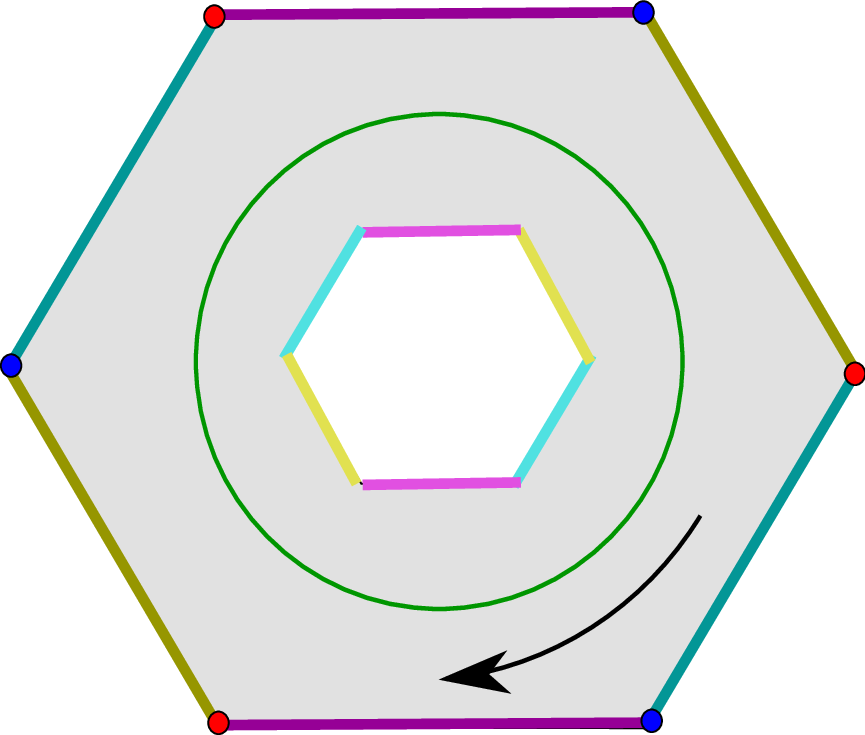}
    \caption{Structure of $M$} \label{fig:Mmon}
    \end{figure}

To understand how the monodromy acts on $F$ when viewed in $S^3$ consider how the monodromy of the trefoil knot acts on, say, the left half $F_\ell$ of Figure \ref{fig:prehexsquare5} (corresponding to $S_-$) and double this to get the action on all of $F$.  
This process is described in three steps in Figure \ref{fig:posthexsquare12}.  On the left are shown how the three arcs corresponding to the pairwise identified sides of the hexagon in Figure \ref{fig:trefoilmon} appear in $F_\ell$. 
The arcs connect the red vertex to the blue vertex and are oriented so that the monodromy takes the top arc to the middle arc and the middle arc to the bottom arc. The monodromy also takes the bottom arc to the top arc, but reverses the orientation, reflecting the fact that the monodromy is of order six.  Red, blue and green properly embedded transverse arcs in $F_\ell$ are added to the figure, and in Figure  \ref{fig:posthexsquare1b} these are slid along the trefoil knot until their ends lie on the vertical arc along which $F_\ell$ is doubled to recover $F$ (Figure \ref{fig:posthexsquare2}).  When the red, blue and green arcs are doubled they become three circles in $F$ which are permuted by the monodromy on $F_\cup$.  Figure \ref{fig:compare} shows the three circles as they appear in $F \subset F_\cup$, the fiber of $M$, and how they appear in $F$, the Seifert surface of $Q$ in $S^3$.

\begin{figure}
\centering
\subfigure[Figure \ref{fig:trefoilmon} in $S^3$]
{
   \includegraphics[scale=0.4]{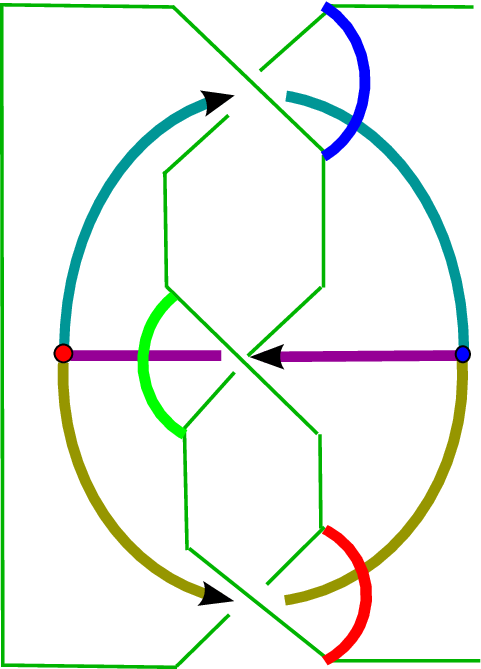}
   \label{fig:posthexsquare1}
}
\subfigure[Sliding]
{
  \includegraphics[scale=0.4]{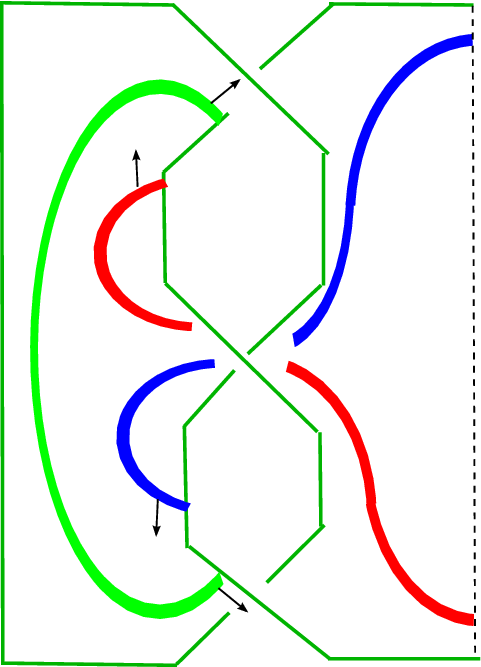}
   \label{fig:posthexsquare1b}
}
\subfigure[The transverse arcs]
{
  \includegraphics[scale=0.4]{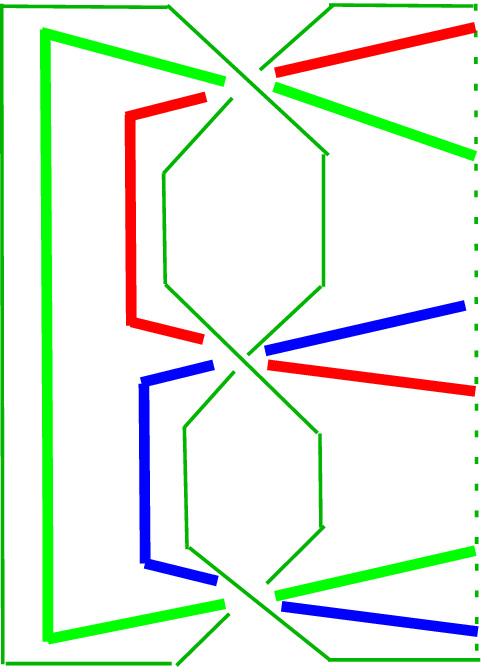}
   \label{fig:posthexsquare2}
}
\caption{}
\label{fig:posthexsquare12}
\end{figure}

\begin{figure}
\centering
\subfigure[]
{
   \includegraphics[scale=0.55]{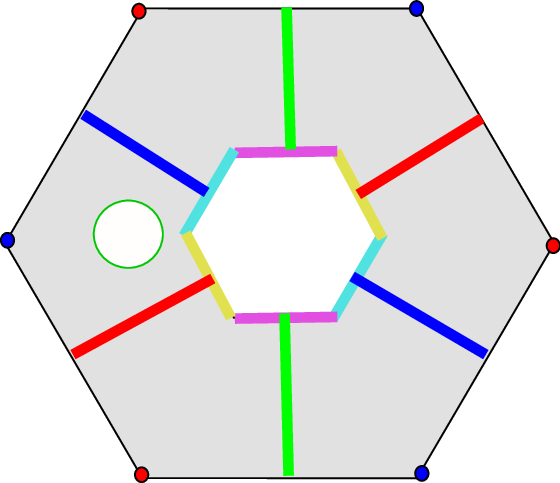}
   \label{fig:Qmonarcs}
}
\subfigure[]
{
  \includegraphics[scale=0.4]{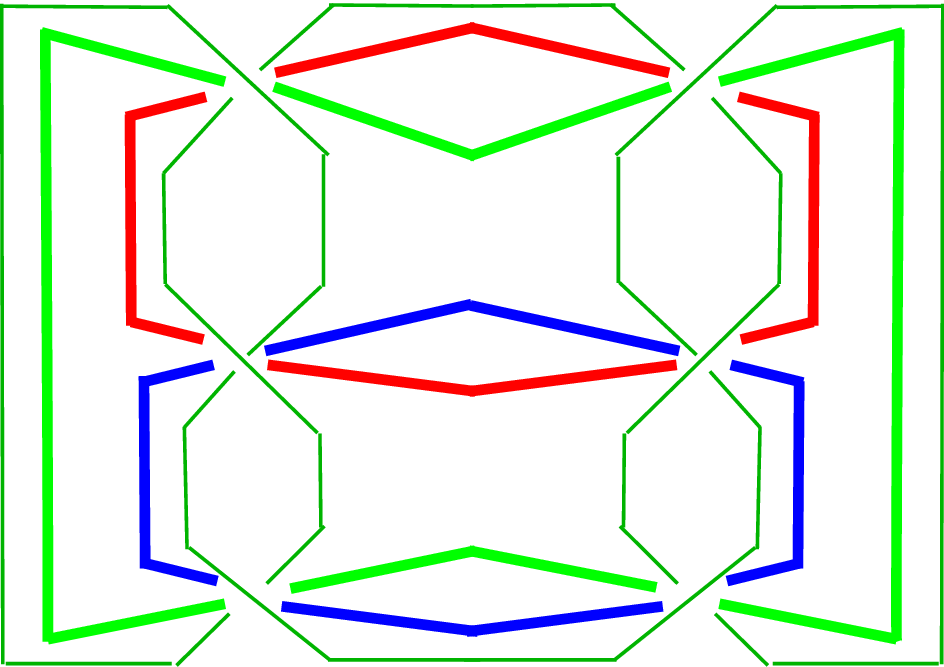}
   \label{fig:squaremon}
}
\caption{}
\label{fig:compare}
\end{figure}

 Note that the three circles in $F$ bear a striking resemblance to the two gray annuli and the red circle of Figure \ref{fig:prehexsquare5}.  We can exploit the resemblance to give a color-coded description of how the apparatus $A$ defining $V_n$ can be viewed in the hexagon model of the monodromy of $F_\cup$.  Picture $S^3 - Q$ as the hexagonal picture of $F \times I$ with  the top $F \times \{1\}$ identified to the bottom $F \times \{0\}$ by the monodromy twist.  Imagine how $A$ would look, viewed from above (i. e. looking down at the top $F \times \{1\}$) after coloring the upper gray annulus in Figure \ref{fig:prehexsquare6} red, the lower gray annulus blue, and the connecting arcs yellow.  This is shown in Figure \ref{fig:hexp1}: away from $\bdd F \times I$ the annuli lie near $F \times \{\frac12\}$.   As the left ends of both the red and blue annuli are incident to $\bdd F \times \{\frac12\}$ they rise along  $\bdd F \times I$ until they pass out of the top $F \times \{1\}$ (represented by a green dot) and continue their rise from the bottom $F \times \{0\}$ until they reach $\bdd F \times \{\frac12\}$ again and are joined to the rest of the red and blue annuli there.  The switch in perspectives (the annuli climbing the vertical wall $\bdd F \times I$ rather than circling around the knot $Q$) changes the apparent sign of the half-twist at $\bdd F \times \{\frac12\}$ from $\pm \frac12$ in Figure  \ref{fig:prehexsquare6} to $\mp \frac12$ in Figure  \ref{fig:hexp1}, much as the apparent half-twist on a ribbon in a book cover will change sign when the book is fully opened (see Figure \ref{fig:bookopen}).  The upshot is that the apparent framing of both the blue and red annuli in Figure  \ref{fig:hexp1} is now zero, the ``blackboard framing".

\begin{figure}
 \centering
    \includegraphics[scale=0.5]{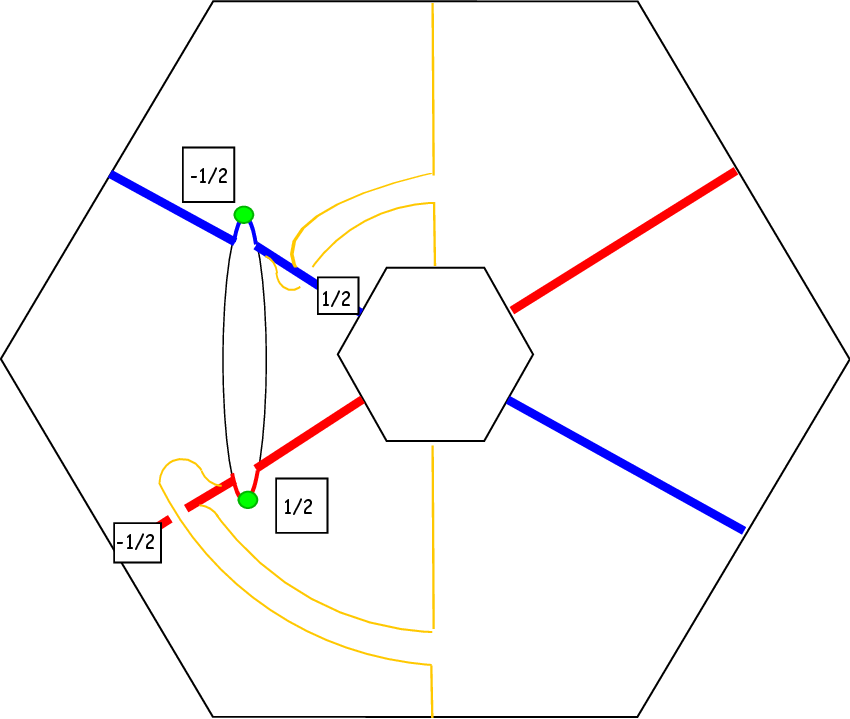}
    \caption{} \label{fig:hexp1}
    \end{figure}
    
    \begin{figure}
 \centering
 \labellist
\small\hair 2pt
\pinlabel  $+\frac12\;$twist at 50 -2
\pinlabel  $-\frac12\;$twist at 150 -2
\endlabellist
    \includegraphics[scale=1]{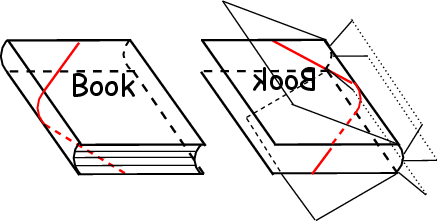}
    \caption{} \label{fig:bookopen}
    \end{figure}

Begin a process of pushing $V_n$ in $M$ until all of $V_n$ lies at the level $F_\cup \times \{\frac12\}$.  First compare Figures \ref{fig:hexp1} and \ref{fig:hexp2}:  Since the isotopy will be in $M$, $F \times I$ has been replaced by $F_\cup \times I$, filling in the missing disk, with only the two green dots remaining.  The green dots continue to represent the points at which the piece of the red and blue annuli to the right of the dots emerge from the top of the box $F_\cup \times I$ and, simultaneously, where the red and blue annuli to the left of the dots enter the bottom of $F_\cup \times I$. (A vertical cross-section of the northwest sextant, roughly parallel to the blue annulus, appears in Figure \ref{fig:xsection}.) One of the two arcs in $A$ that connects the red annulus to the blue annulus has changed color from yellow to brown. This will be the arc $\beta$ that is pushed first through the top of the box.  The point at which $\beta$ is incident to the blue annulus has been slid in $F_\cup \times \{\frac12\}$ so that it lies just to the left of the green dot instead of to the right.  Now push $\beta$ through the top of the box, at which point it reappears in $M$ at the bottom of the box, but, because of the monodromy, with a $\frac{\pi}3$ clockwise rotation of the ends of $\beta$ on the inner and outer hexagons.  The result, after also sliding the end of $\beta$ on the red annulus back to its original position to the left of the green dot, is shown in Figure \ref{fig:hexp3}.  The movement of the end of $\beta$ at the blue annulus is shown schematically, clockwise from the upper left, in Figure \ref{fig:xsection}.

\begin{figure}
 \centering
    \includegraphics[scale=0.5]{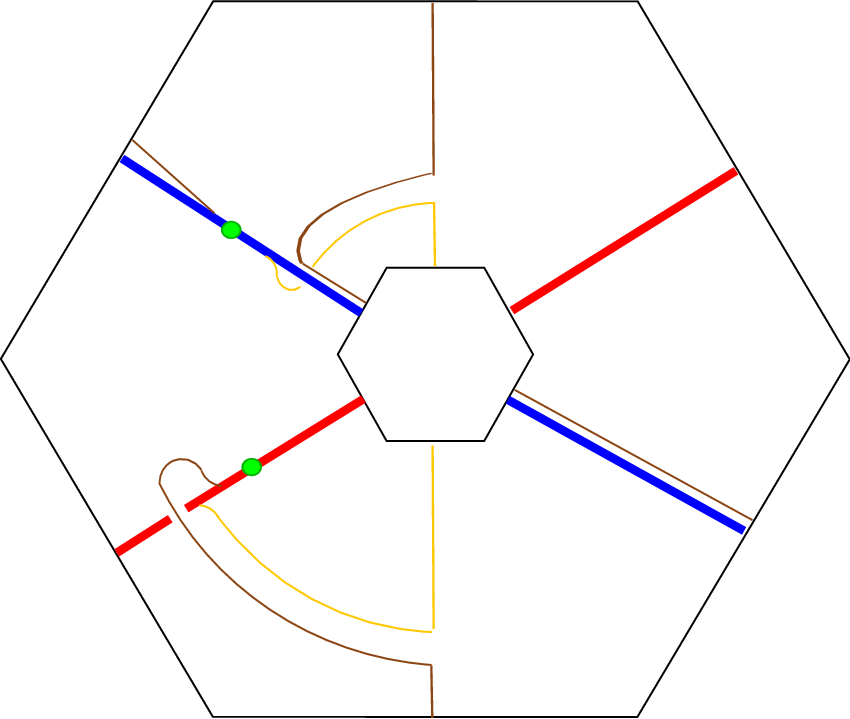}
    \caption{} \label{fig:hexp2}
    \end{figure}

\begin{figure}
 \centering
    \includegraphics[scale=0.5]{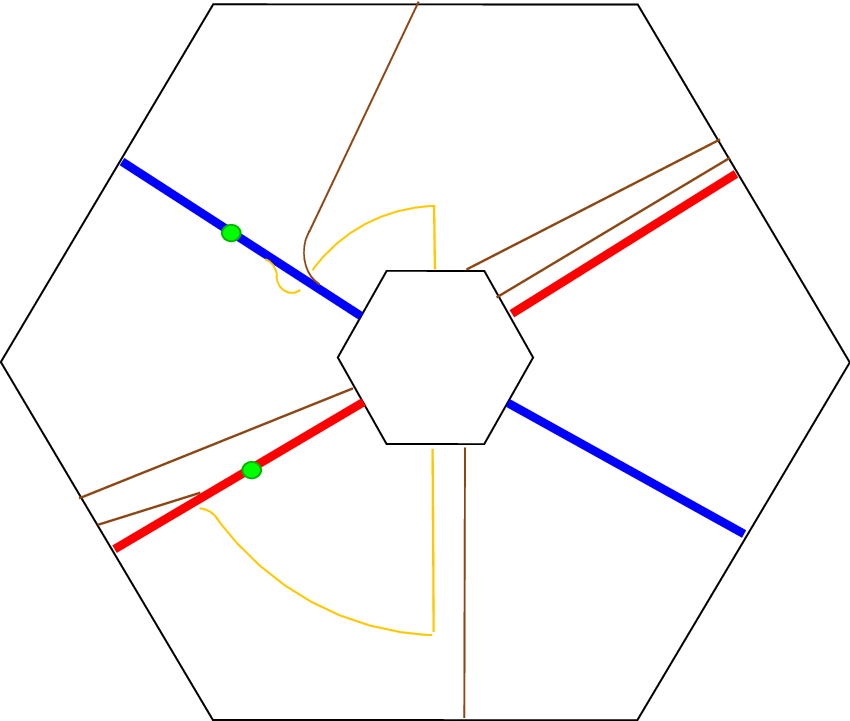}
    \caption{} \label{fig:hexp3}
    \end{figure}

\begin{figure}
 \centering
    \includegraphics[scale=0.5]{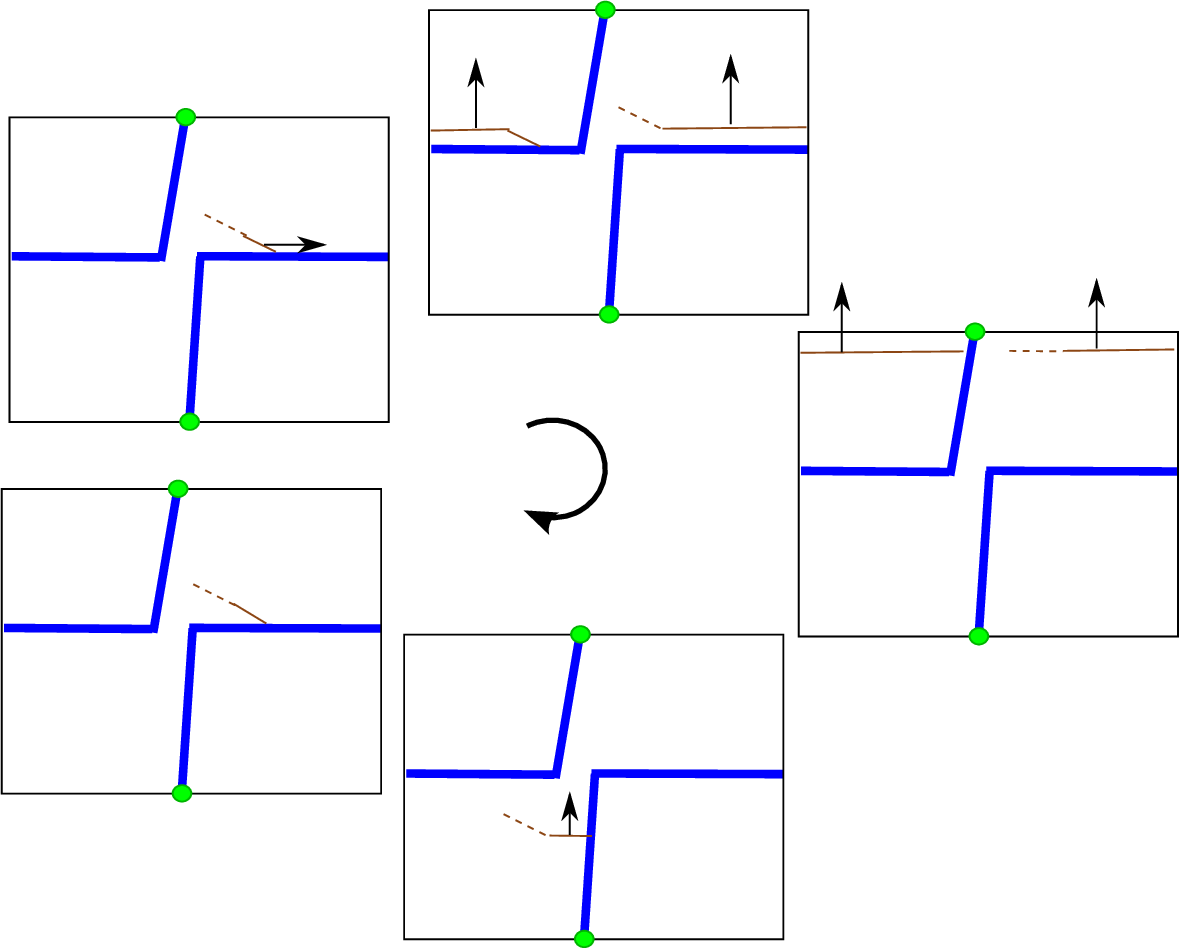}
    \caption{} \label{fig:xsection}
    \end{figure}

A symmetric argument describes how to push {\em down} the other arc (shown in yellow) that connects the blue and red annuli.  The result is shown in two steps in Figures \ref{fig:hexp4} and \ref{fig:hexp5}.  The more pleasing Figure \ref{fig:hexp6} is then obtained by sliding the points where the arcs are incident to the annuli, so that they all appear on the right side of the figure.  The red and blue annuli are now only $n-2$ arcs wide, so in the case $n = 2$, Figure \ref{fig:hexp6}, with the red and blue annuli omitted, represents the final positioning of $V_2$, completely on the surface $F_\cup$ as was predicted to be possible.  

\begin{figure}
\centering
\subfigure[]
{
   \includegraphics[scale=0.4]{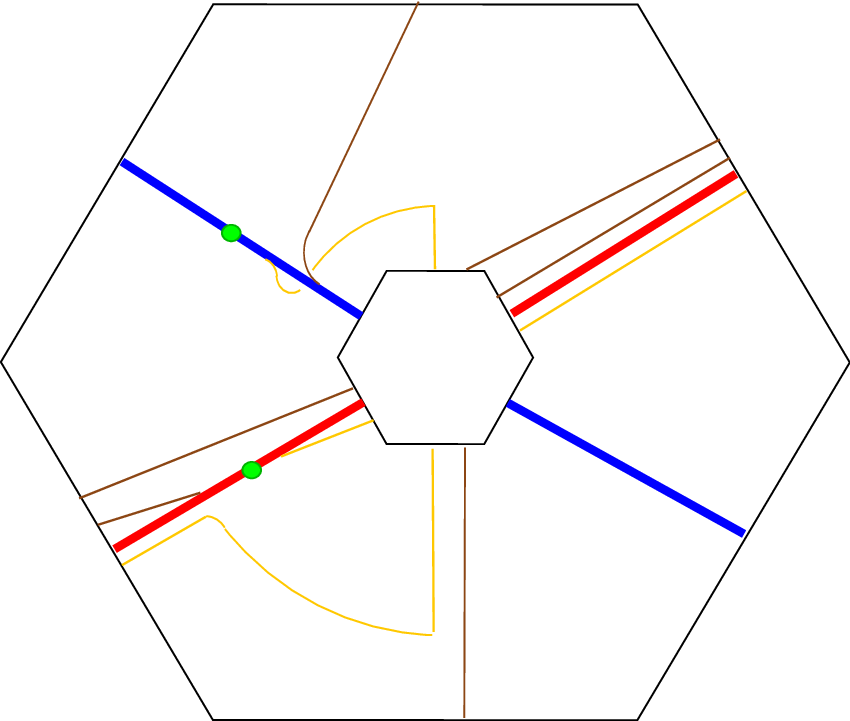}
   \label{fig:hexp4}
}
\subfigure[]
{
  \includegraphics[scale=0.4]{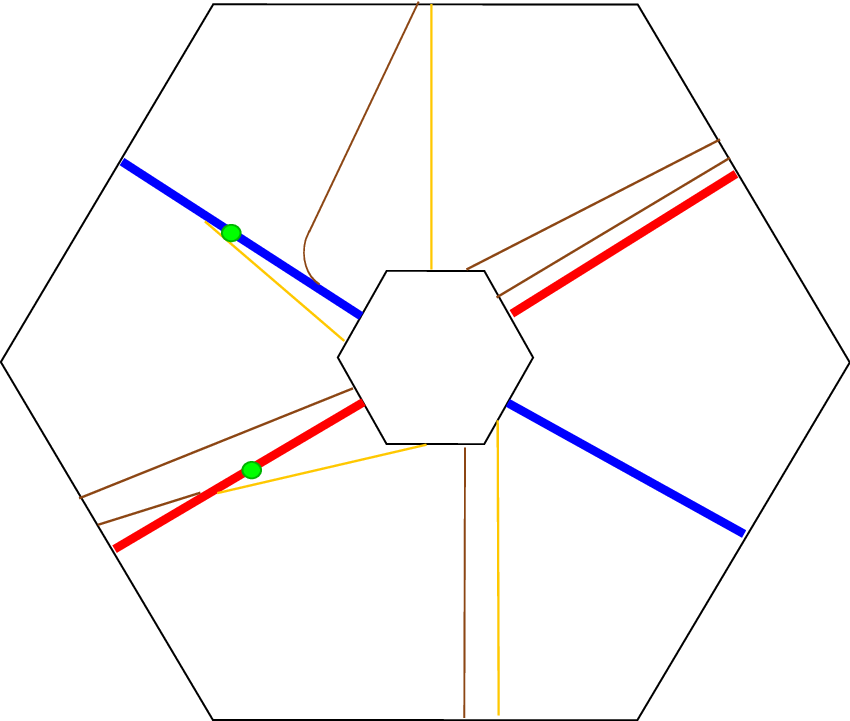}
   \label{fig:hexp5}
}
\subfigure[]
{
  \includegraphics[scale=0.5]{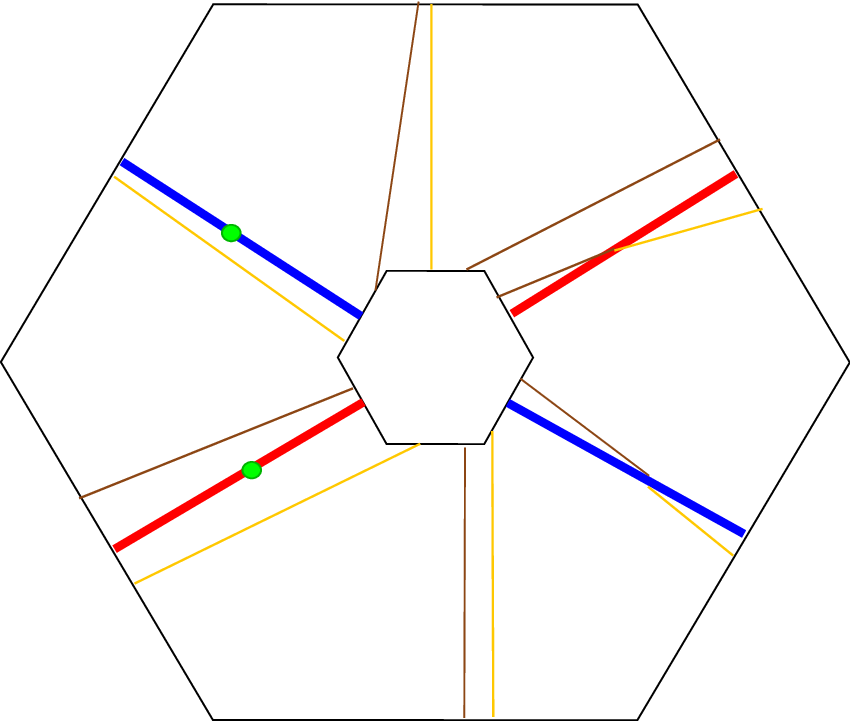}
   \label{fig:hexp6}
}
\caption{}
\label{fig:pushdown}
\end{figure}

\begin{figure}
\centering
\subfigure[]
{
   \includegraphics[scale=0.41]{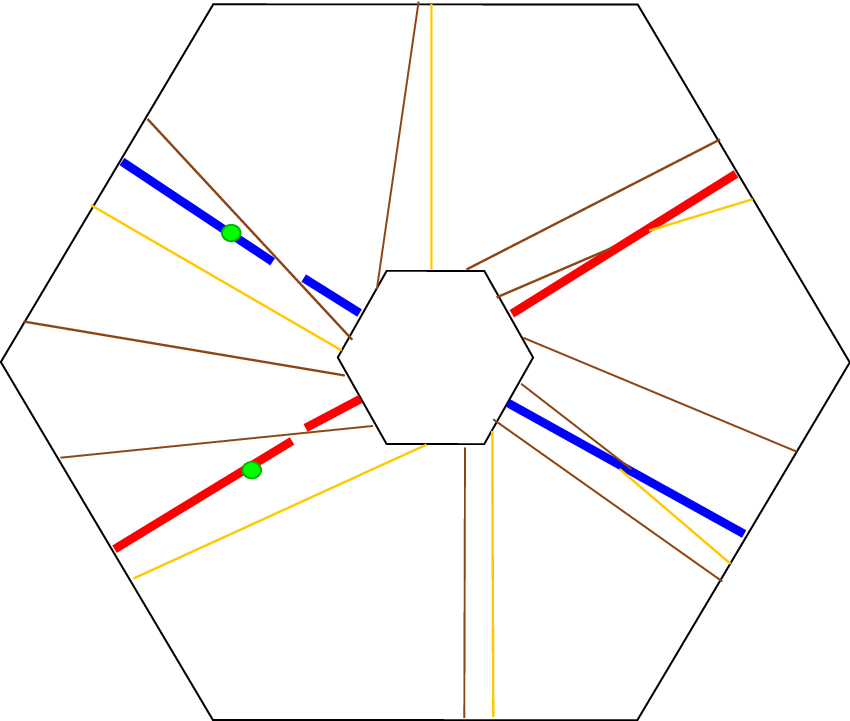}
   \label{fig:hexp8}
}
\subfigure[]
{
  \includegraphics[scale=0.41]{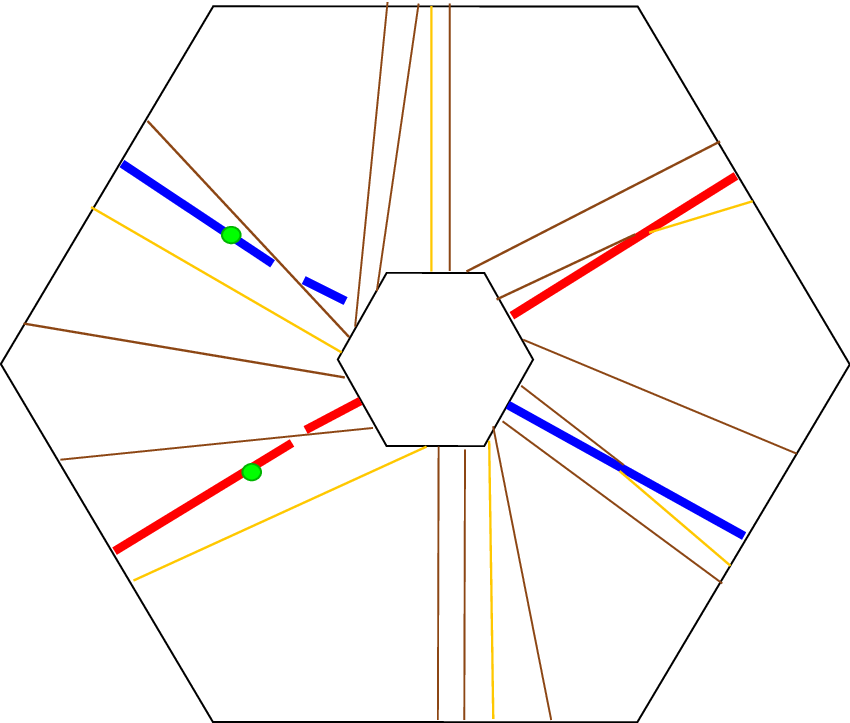}
   \label{fig:hexp10}
}
\caption{}
\label{fig:twopush}
\end{figure}

\begin{figure}
\centering
\subfigure[$n=3$]
{
   \includegraphics[scale=0.41]{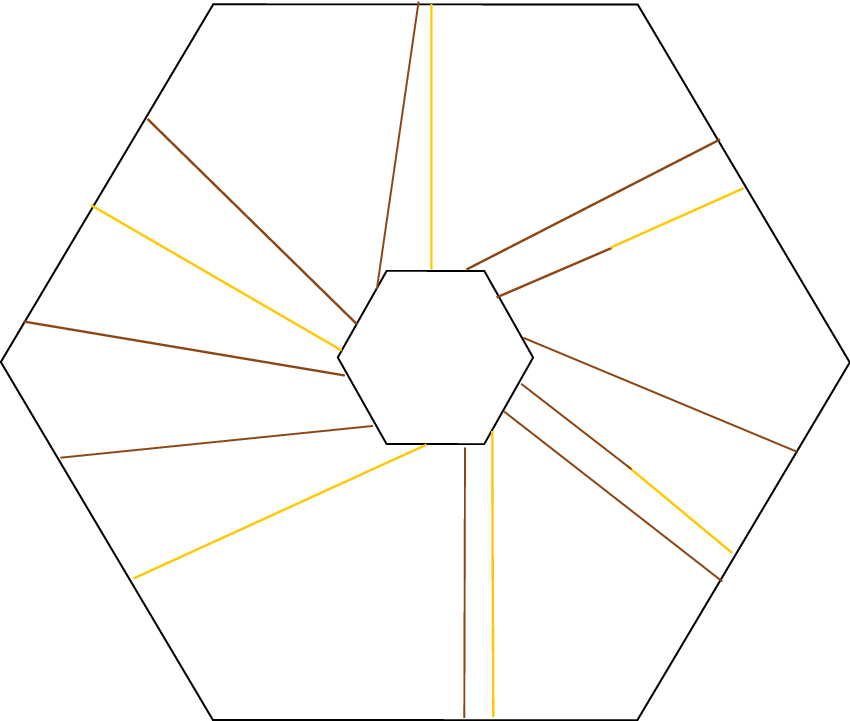}
   \label{fig:deg7ex}
}
\subfigure[$n=4$]
{
  \includegraphics[scale=0.41]{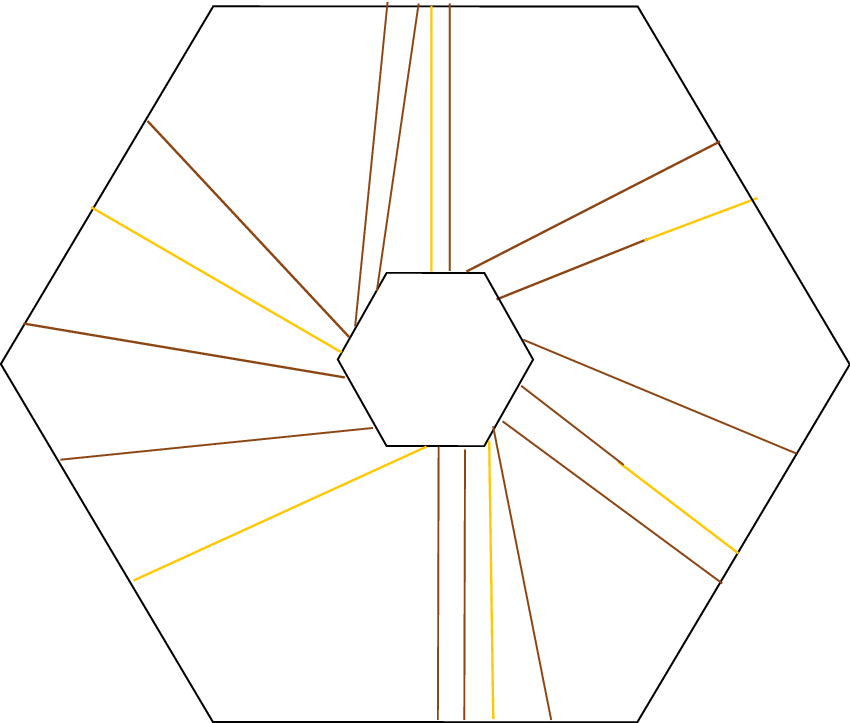}
   \label{fig:deg9ex}
}
\caption{}
\label{fig:n=34}
\end{figure}

If $n \geq 3$ further moves of $V_n$ are required.   Figures \ref{fig:hexp8} and \ref{fig:hexp10} show two more upward pushes of the brown arc $\beta$ through the top of the box $F_\cup \times I$ and, via the clockwise $\frac{\pi}3$ rotation of the monodromy, back through the bottom of the box. (Figures \ref{fig:deg7ex} and \ref{fig:deg9ex} show the corresponding final position of $V_n, n = 3, 4$ on a fiber $F_\cup$ of $M$.)    The fact that, after the push, a segment of $\beta$ crosses {\em over} over the blue annulus (and another crosses over the red annulus) is at first puzzling.  But recall how the monodromy acts on $F_\cup$:  Up to isotopy, it is a $\frac{\pi}3$ clockwise twist, but it also fixes the green dots in the figure (the points where the blue and red annuli intersect the top and the bottom of the box $F_\cup \times I$.  So the monodromy itself looks a bit like that in Figure \ref{fig:Qmon}.  Thus a segment of $\beta$ spanning the southern sextant of $F$, when pushed out the top of $F \times I$ and reappearing at the bottom, has its ends rotated to the southwest sextant, but the middle of the segment will pass south of the green dot on the red annulus.  The segment does then pass under the red annulus, but the segment can be straightened by an isotopy that appears to move the segment from below the annulus to above the annulus.  See Figure \ref{fig:straighten}, which also shows the isotopy in a vertical cross-section near the red annulus in the southwest sextant.  Similar remarks apply to segments of $\beta$ passing over the blue annulus. 

\begin{figure}
 \centering
    \includegraphics[scale=0.7]{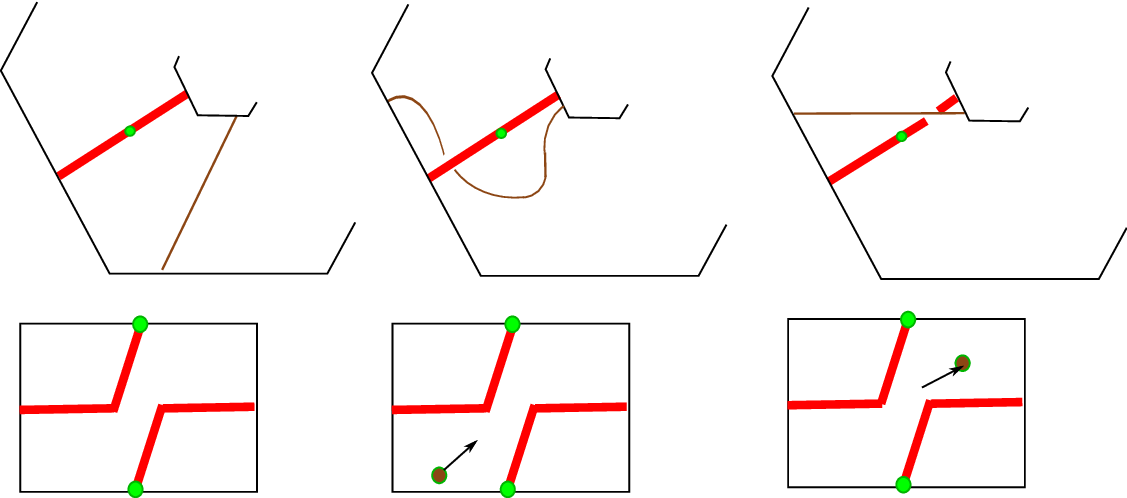}
    \caption{} \label{fig:straighten}
 \end{figure}

The final appearance of $V_n$ on the fiber of $M$ depends mostly on $n \; mod\;3$.  Figure \ref{fig:finalrest} shows the general case for $n \equiv -1, 0, 1$ by depicting with brown bands collections of $j-1$ parallel segments of $\beta$.    The blue and red annuli in the figures can be ignored; they have been included only to help imagine the transition from one step to the next. At several places in the figure it appears that a single segment of $\beta$ intersects a brown band, but this is just shorthand for a double-curve sum of the crossing arc with the $j-1$ curves in the band, as shown in Figure \ref{fig:hexgenRem2a}.) 

\begin{figure}
\centering
\labellist
\small\hair 2pt
\pinlabel  $j-1$ at -25 140
\endlabellist
\subfigure[$n=3j-1$]
{
   \includegraphics[scale=0.51]{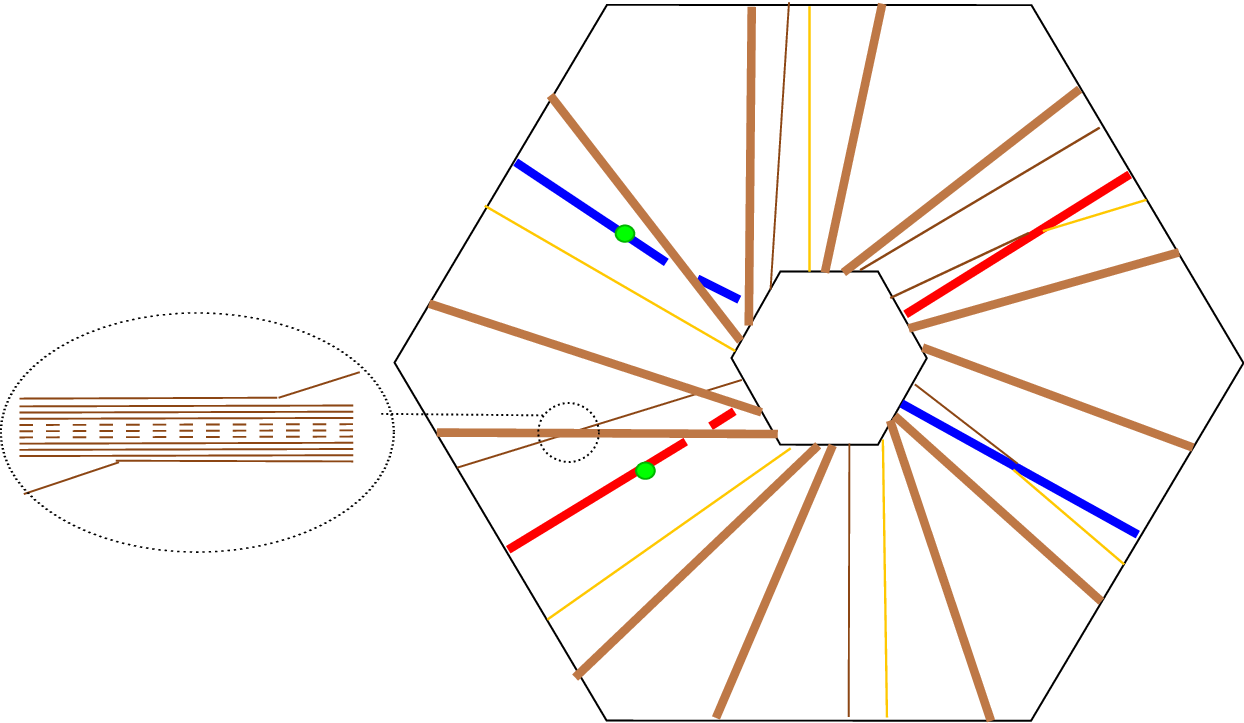}
   \label{fig:hexgenRem2a}
}
\subfigure[$n=3j$]
{
  \includegraphics[scale=0.41]{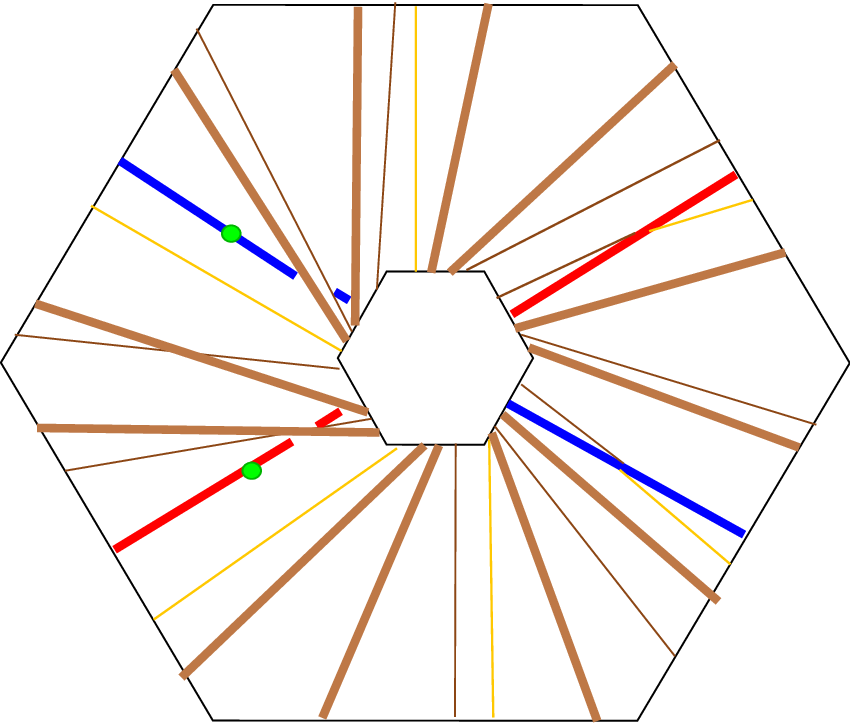}
   \label{fig:hexgenRem0}
}
\subfigure[$n=3j+1$]
{
  \includegraphics[scale=0.41]{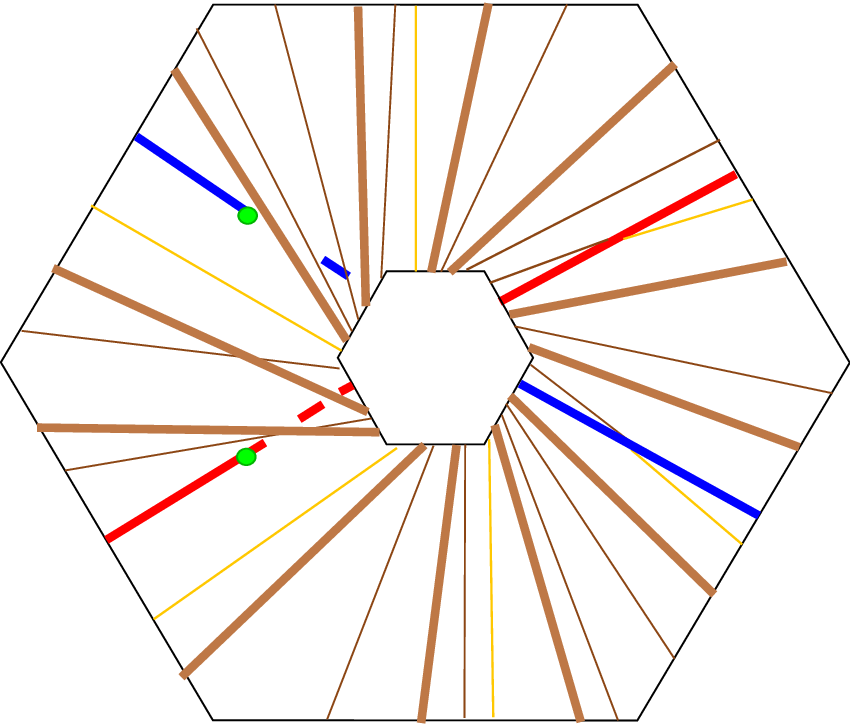}
   \label{fig:hexgenRem1}
}
\caption{}
\label{fig:finalrest}
\end{figure}

Of course all these presentations of $V_n \subset F_\cup \subset M$ can be translated to pictures of $V_n \subset F \subset S^3$. This translation, in the case $n=3$, is shown in Figure \ref{fig:ribbon} via a three-stage process:  The points in which $V_n$ intersects the circle separating the trefoil summands are labeled sequentially, as shown in Figure \ref{fig:hexribbon_n=3a}.  Then the discussion around Figure \ref{fig:posthexsquare12} is used to locate each of these subarcs in the appropriate place on the Seifert surface $F \subset S^3$, as shown in Figure \ref{fig:sqribbon_n=3a}. These subarcs are mostly joined along the arc that separates the left- and right-hand trefoil knots, but there is a dangling end at both the top and the bottom of $F$ in the figure, reflecting that joining these ends by a subarc requires a choice of how $V_n$ is to avoid the disk $F_\cup - F$ bounded by $Q$.  The choice is whether to connect these ends by an arc parallel to the trefoil knot on the left or parallel to the one on the right.  Figure  \ref{fig:sqribbon_n=3b} shows the result when the two ends are connected along the trefoil on the left, via an arc that is rendered in red.   The resulting link in $S^3$ is slice by construction; it is a nice question (just as it was for \cite[Figure 2]{GST}) whether this link, or examples from higher $n$, are also ribbon links.  Alex Zupan \cite{Zu} has pointed out that this question is of more than passing interest:  it is easy to see that handle slides preserve the property of being a ribbon link, so if these examples are not ribbon links then they are also counterexamples to Property 2R.  

\begin{figure}
\centering
\subfigure[]
{
   \includegraphics[scale=0.5]{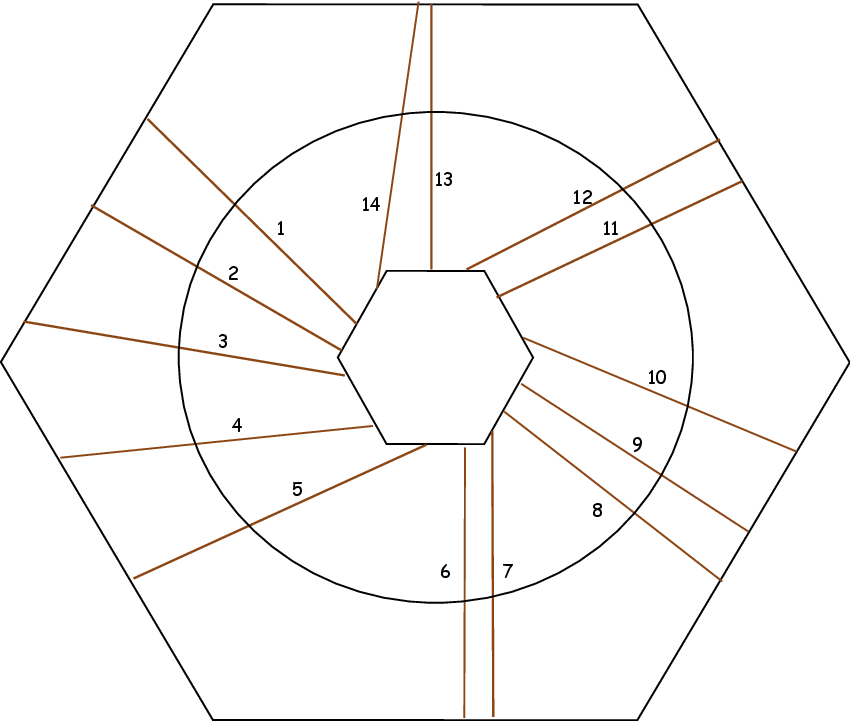}
   \label{fig:hexribbon_n=3a}
}
\subfigure[]
{
  \includegraphics[scale=0.5]{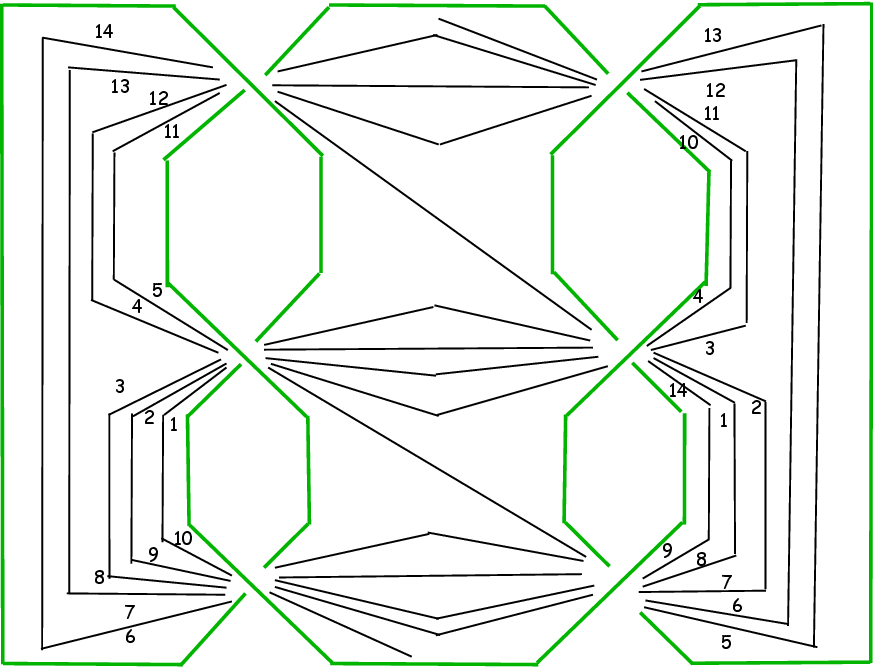}
   \label{fig:sqribbon_n=3a}
}
\subfigure[Is it a ribbon link?]
{
  \includegraphics[scale=0.5]{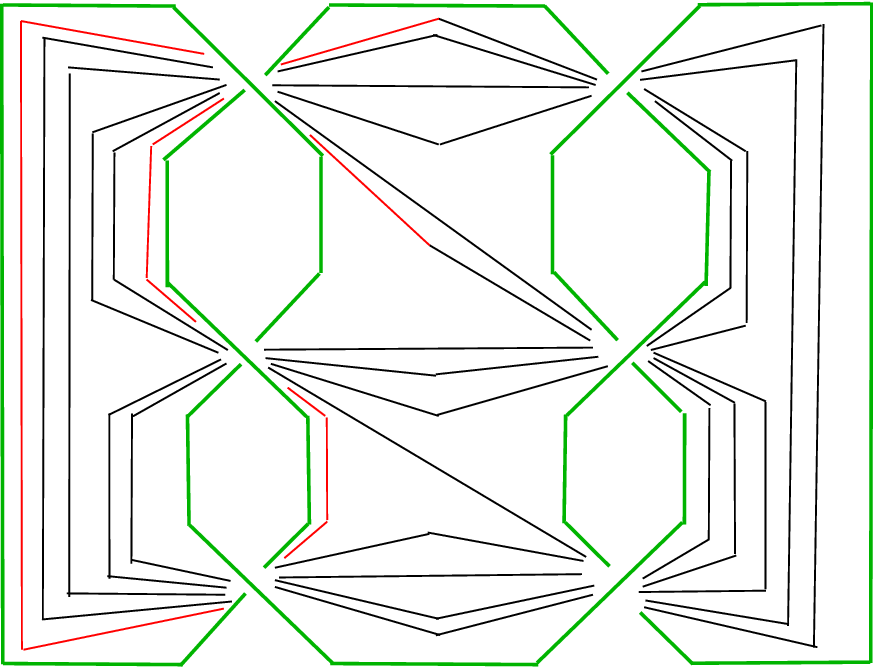}
   \label{fig:sqribbon_n=3b}
}
\caption{}
\label{fig:ribbon}
\end{figure}

\section{The curve that $V_n$ covers in a 4-punctured sphere} \label{sect:classification}

Recall from \cite{GST} that there is a natural $\mathbb{Z}_3$ action on the genus $2$ surface $F_\cup$, by which $F_\cup$ is a $3$-bold branched cover of $S^2$ with $4$ branch points. See \cite[Figure 7]{GST}, reproduced as Figure \ref{fig:rhosigma} here.  It is further shown (\cite[Corollaries 6.2, 6.4]{GST}) that $0$-framed surgery on a simple closed curve $V \subset F_\cup \subset M$ yields $(S^2 \times S^2) \# (S^2 \times S^2)$ if and only if $V$ projects homeomorphically to an essential simple closed curve in the $4$-punctured sphere $P = S^2 - \{branch\;points\}$ and that an essential simple closed curves in $P$ is the homeomorphic image of a curve in $F_\cup$ if and only if it separates two branch points coming from the same trefoil summand of $Q$.

\begin{figure}
 \centering
    \includegraphics[scale=0.6]{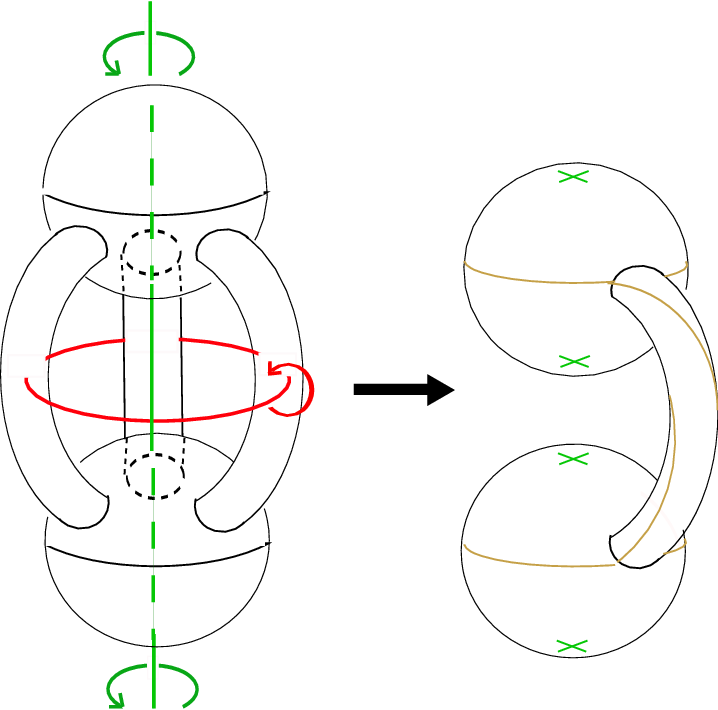}
    \caption{} \label{fig:rhosigma}
 \end{figure}

The hexagonal description of $F_\cup$ given in Section \ref{sect:hex} (e.g. Figure \ref{fig:Mmon}) is particularly easy to see as a branched cover over $S^2$.  Start by giving $S^2$ the ``pillowcase" metric: view $S^2$ as constructed from two congruent rectangles, with their boundaries identified in the obvious way.  The corners of the rectangles will be the branch points for the covering and the two rectangles will be called the front face and the back face of $P \subset S^2$.   Identify the top sextant of Figure  \ref{fig:Mmon} with the front face of $P$ and wrap the other five sextants equatorially around $P$.   See  Figure \ref{fig:prebridge}.
Then the northeast, northwest and the bottom sextant are all identified with the back face of $P$ and the southeast and southwest sextants with the front face.  The identifications of the boundary edges in Figure \ref{fig:Mmon} are consistent: for example the top and bottom edges of the outside hexagon in the figure have been identified with the top edge of, respectively, the front and the back face of $P$, so the identification of these edges to create $F_\cup$ is consistent with the identification of the top edges of the two rectangles to form $S^2$.

\begin{figure}
 \centering
    \includegraphics[scale=0.4]{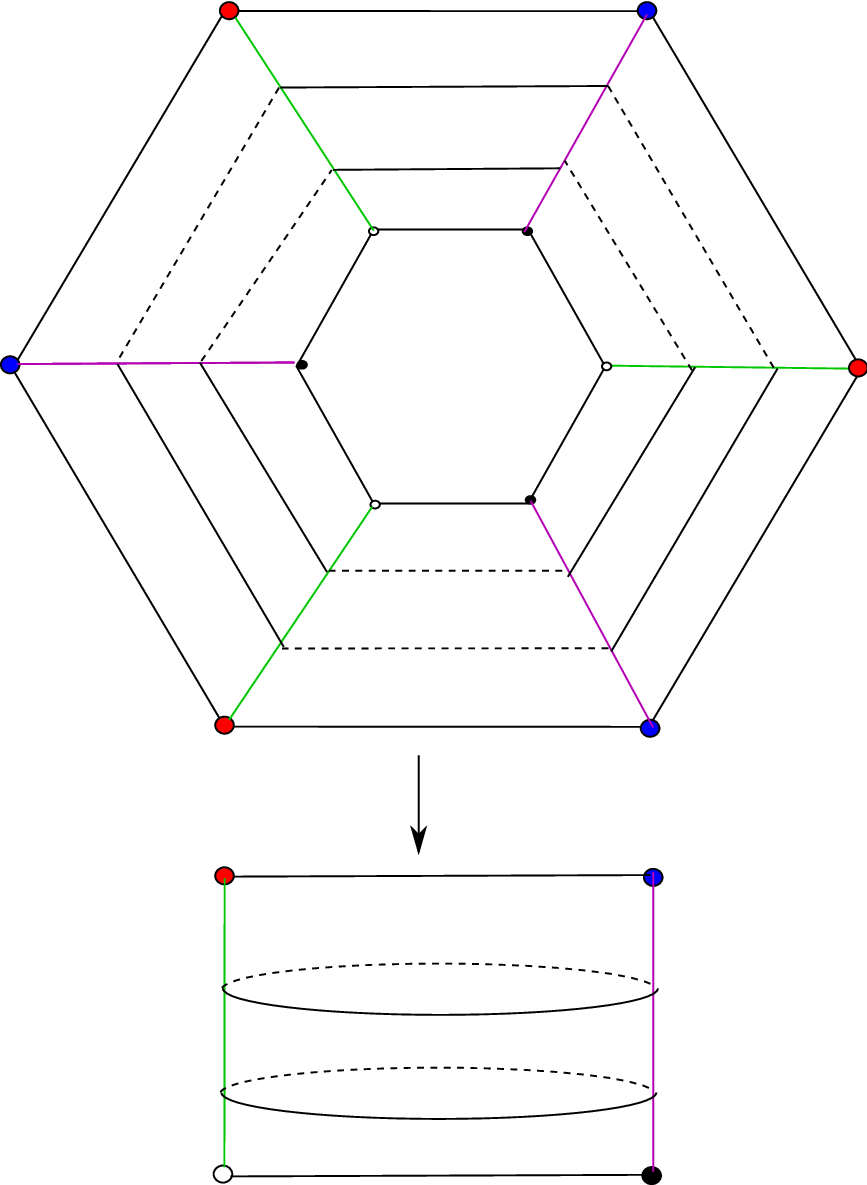}
    \caption{} \label{fig:prebridge}
 \end{figure}

There is a natural correspondence between isotopy classes of simple closed curves in $P$ and the extended rationals $ \mathbb{Q} \cup \infty$.  The correspondence is given by the slope in the pillowcase metric.  It is a bit more useful in our context to take the reciprocal of the apparent slope in Figure \ref{fig:prebridge} or, equivalently, to turn the pillowcase in the figure on its side. Thus one of the horizontal curves in $P$ shown in Figure \ref{fig:prebridge} has slope $0$, so it will correspond here to $\frac 10 = \infty \in \mathbb{Q}  \cup \infty$.  Such a curve is $3$-fold covered by a simple closed curve in $F_\cup$ that divides $F_\cup$ into the two genus-one 
surfaces $F_\ell$ and $F_r$, Seifert surfaces for the two trefoil summands of $Q$.  A simple closed curve in $P$ separates the two 
branch points lying in $F_\ell$ (equivalently, separates the two that lie in $F_r$) if and only if it intersects the top seam of the 
pillowcase in an odd number of points; that is, if and only if it corresponds to a fraction $\frac pq \in \mathbb{Q} \cup \infty$ for which 
$q$ is odd. So, for example, a circle in $P$ that is vertical in Figure \ref{fig:prebridge} is assigned $0 = \frac 01 \in \mathbb{Q}$ and corresponds to any of the three unknotted circles in $F$ shown in Figure \ref{fig:squaremon}.  In this manner, $\{ p/q \in \mathbb{Q} \; | 
\; q \;\;odd \}$ becomes a natural index for the set of curves $V \subset S^3 - Q$ such that surgery on $Q \cup V$ gives $(S^1 \times 
S^2) \# (S^1 \times S^2)$.  (Since $\infty = \frac 10$ does not have odd denominator, we can ignore it.)  It is natural to ask exactly how 
the curves $V_n$ fit into this classification scheme.  

\bigskip

\noindent {\bf Remark:}  Here is the rationale for taking the reciprocal of the apparent slope, i. e. for turning Figure \ref{fig:prebridge} on its side.  There is a natural automorphism $(S^3, Q) \to (S^3, Q)$ called a {\em twist} (see \cite{Z} for a related use of the term).   A twist fixes one of the trefoil summands of $Q$ and rotates the other trefoil summand fully around the two points at which the summands are joined.  (A more technical description:  a twist is a meridional Dehn twist along a swallow-follow companion torus $T_{sf}$ for $Q$.)  It's fairly easy to see that a twist changes a curve $V \subset F$ indexed by $\frac pq \in \mathbb{Q} \cup \infty$ to one  indexed by  $\frac {p \pm q}{q} \in \mathbb{Q} \cup \infty$.  So if we also allow $V$ to change by such twists of $Q$, which do not change the isotopy class of the link $Q \cup V$, as well as by isotopy and slides of $V$ over $Q$, we could even index the curves $V$ by $\frac pq \in \mathbb{Q/Z}$, $q$ odd and, as a result, focus attention on those indices $\frac pq$ in which $|2p| < q$.  The next theorem shows that, from this point of view, the curves $V_n$ are at the extreme.

\begin{thm} \label{thm:class} In the classification scheme above, the curve $V_n$ corresponds to $\frac{n}{2n+1} \in \mathbb{Q}.$
\end{thm}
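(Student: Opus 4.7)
The plan is to read off the classification slope of $V_n$ by projecting directly through the $\mathbb{Z}_3$-branched cover $F_\cup \to P$ of Section \ref{sect:classification}. In the hexagonal monodromy picture this $\mathbb{Z}_3$ action is the rotation by $\frac{2\pi}{3}$, folding the six sextants of the hexagon onto the two faces of the pillowcase $P$ as in Figure \ref{fig:prebridge}. Under the reciprocal convention of Section \ref{sect:classification}, the theorem is equivalent to showing that the projected curve $\bar V_n \subset P$ has pillowcase slope $\frac{2n+1}{n}$.

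To pin down this slope it suffices to compute two intersection numbers of $V_n$ with reference curves of known slope. I would use (i) the three disjoint unknotted circles of Figure \ref{fig:squaremon}, which are the three homeomorphic lifts of a vertical pillowcase curve $\bar v$ (classification slope $0$), and (ii) the single connected $3$-fold lift $\tilde h \subset F_\cup$ of a horizontal curve $\bar h$ (classification slope $\infty$). A short $\mathbb{Z}_3$-equivariance argument, combined with the standard minimal intersection formula on the pillowcase, shows that if $\bar V_n$ has pillowcase slope $a/b$ in lowest terms then
\[
\bigl|V_n \cap (v \cup v' \cup v'')\bigr| = |b|, \qquad \bigl|V_n \cap \tilde h\bigr| = |a|.
\]
The theorem therefore reduces to verifying, directly from the final placements of $V_n$ in Figures \ref{fig:hexgenRem2a}, \ref{fig:hexgenRem0}, and \ref{fig:hexgenRem1}, that these two counts are $n$ and $2n+1$ respectively. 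Both reference families are transparent in the hexagonal picture: the three slope-$0$ circles are the three long diameters of the hexagon visible in Figure \ref{fig:Qmonarcs}, and $\tilde h$ is a loop running around the hexagon at constant radius, so the intersections can simply be tallied off the figures.

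The main obstacle I expect is the case analysis on $n \bmod 3$, together with careful expansion of the double-curve-sum shorthand introduced after Figure \ref{fig:hexgenRem2a}: each apparent single crossing in those figures actually conceals $j-1$ (or $j$) parallel strands of the brown band, all of which must be counted. One must also rule out cancellations between parallel strands — that is, check that geometric and algebraic intersection numbers agree — which reduces to verifying that the $\pm\frac12$ twists absorbed into each annulus' passage around $Q$ in Figure \ref{fig:prehexsquare6}, together with the clockwise monodromy rotation of each pushed arc, preserve the local orientation with which each strand meets each reference curve. Once that orientation check is in hand, the two tallies should fall out uniformly as $n$ and $2n+1$ across the three residue classes, yielding classification slope $\frac{n}{2n+1}$.
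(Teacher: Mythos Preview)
Your plan is essentially the paper's own proof: count intersections of $V_n$ with two transverse reference families in the hexagonal model and read off the slope as their ratio.  The paper does this in three sentences, taking as references the outer hexagon (count $4n+2$) and the six sextant-dividing radii (count $2n$); your constant-radius loop $\tilde h$ and your three diameters are parallel copies of exactly these, so you will obtain the same two numbers.

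Two small corrections.  First, your intersection formula drops a factor of~$2$: essential simple closed curves of slopes $a/b$ and $c/d$ in the four-punctured sphere meet minimally in $2|ad-bc|$ points, so the tallies you will actually read off Figure~\ref{fig:finalrest} are $2n$ and $4n+2$, not $n$ and $2n+1$.  The ratio is of course unaffected.  Second, the orientation and cancellation checks you anticipate are unnecessary: the classification slope is determined by \emph{geometric} intersection number, and in Figure~\ref{fig:finalrest} the arcs of $V_n$ visibly form no bigons with either the outer hexagon or the sextant lines, so the drawn position is already minimal and no sign bookkeeping is needed.  The only genuine work is expanding the $(j-1)$-strand shorthand across the three residue classes of $n$, and the paper simply asserts that this yields $4n+2$ and $2n$ uniformly.
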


\begin{proof}  Examine Figure \ref{fig:finalrest}, ignoring the red and blue bands and recalling that each wide brown arc represents $j-1$ parallel arcs.  The number of intersection points of $V_n$ with the outer hexagon is the total number of arcs that appear, $4n + 2$.  The number of intersection points of $V_n$ with the six lines that divide the figure into sextants is $2n$.  The ratio is then $\frac{n}{2n+1}$.
\end{proof}

The case $n = 4$ is shown in Figure \ref{fig:bridge}.

\begin{figure}
 \centering
    \includegraphics[scale=0.5]{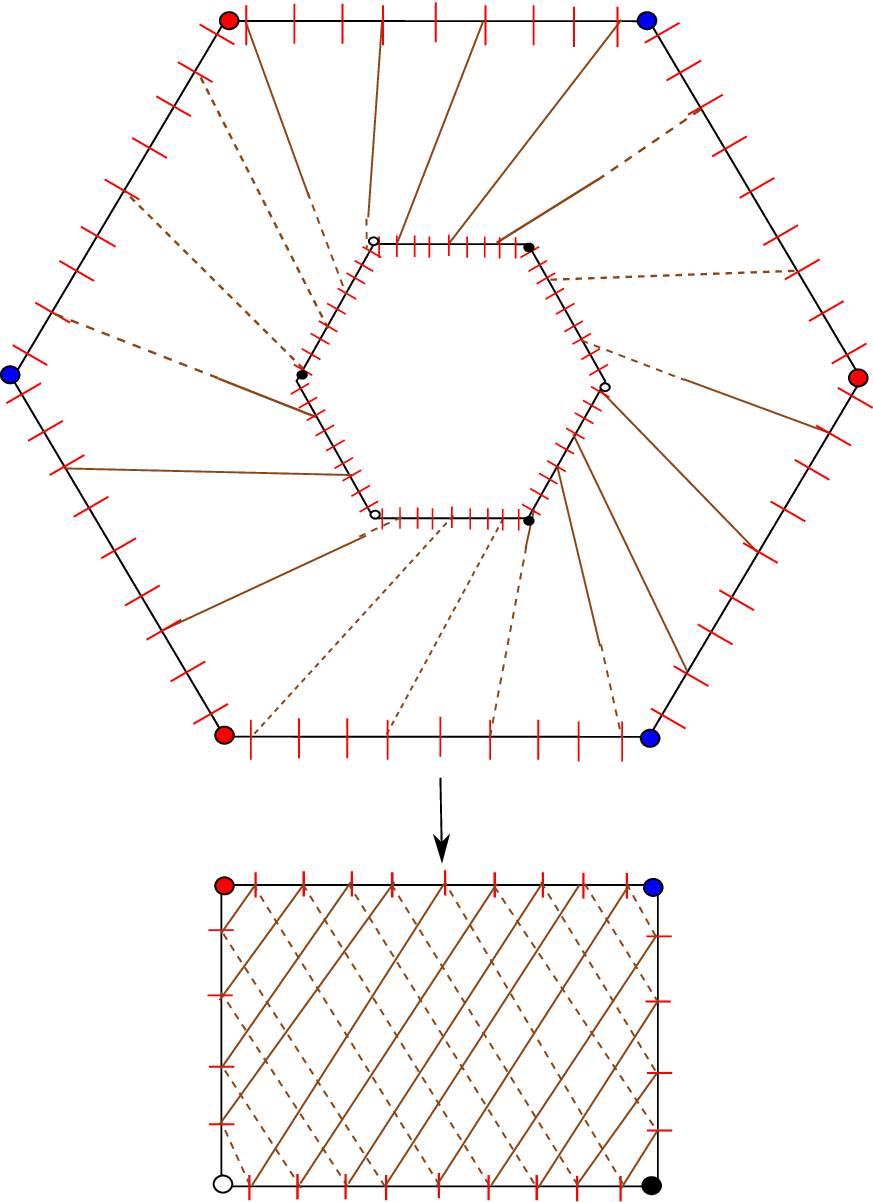}
    \caption{} \label{fig:bridge}
 \end{figure}

\section{An alternate view of the construction}\label{sect:alternate}

Essentially the first step in the construction above (see Figure \ref{fig:prehexstep2}) was to import \cite[Figure 12]{GST} and blow down the two $\pm 1$ bracketed unknots.  There is another way to organize the construction, one which delays the blow-down until much later and so gives additional insight into how $V_n$ lies in $M$.  Begin with the link diagram \cite[Figure 19]{GST} (a version of \cite[Figure 11]{GST}) that describes $L_n$.  This is shown here in Figure \ref{fig:GSTFig19}, augmented so that the relevant torus $T\subset M$ is more visible:  The vertical plane, mostly purple but containing a green disk, is a $2$-sphere in $S^3$ that contains a circle on which $0$-framed surgery is performed.  The surgery splits the $2$-sphere into a pair of $2$-spheres, one green and one purple.  In the diagram the two $2$-spheres are connected by two thick pink strands.  The torus $T$ is obtained by tubing the two $2$-spheres together along the annuli boundaries of the two pink strands.  The $\pm n$ twist-boxes represent the $n$-fold Dehn twist along the  meridian of $T$ used in the construction of $V_n$ (see \cite[Section 10]{GST}).  

\begin{figure}
     \labellist
\small\hair 2pt
\pinlabel  $n$ at 170 155
\pinlabel  $-n$ at 170 75
\pinlabel  $[1]$ at 25 275
\pinlabel  $[-1]$ at 223 275
\pinlabel  $0$ at 180 255
\pinlabel  $0$ at 25 155
\endlabellist
 \centering
    \includegraphics[scale=0.5]{GSTFig19}
    \caption{} \label{fig:GSTFig19}
    \end{figure}
    
Figure \ref{fig:simplify1} shows a sequence of isotopies which moves the link in Figure \ref{fig:GSTFig19} (including the bracketed unknots) to a position in which something like the square knot begins to appear.   When the top black circle is slid over the two red circles labeled $[\pm 1]$ the square knot fully emerges:  Figure \ref{fig:simplify2} shows the resulting circle as a green square knot, on which $0$-surgery is still to be performed, and also illustrates how the bracketed red circles can be pushed near its Seifert surface.  (The dotted parallel green and red arcs in the twist boxes are to indicate that the twisting is of the black curve around the red and green curves, not the red and green curves around each other.)
    
\begin{figure}  
\centering
\subfigure[]
{
  \labellist
\small\hair 2pt
\pinlabel  $n$ at 172 100
\pinlabel  $-n$ at 168 20
\pinlabel  $[1]$ at 25 220
\pinlabel  $[-1]$ at 225 220
\pinlabel  $0$ at 180 200
\pinlabel  $0$ at 25 100
\endlabellist
   \includegraphics[scale=0.5]{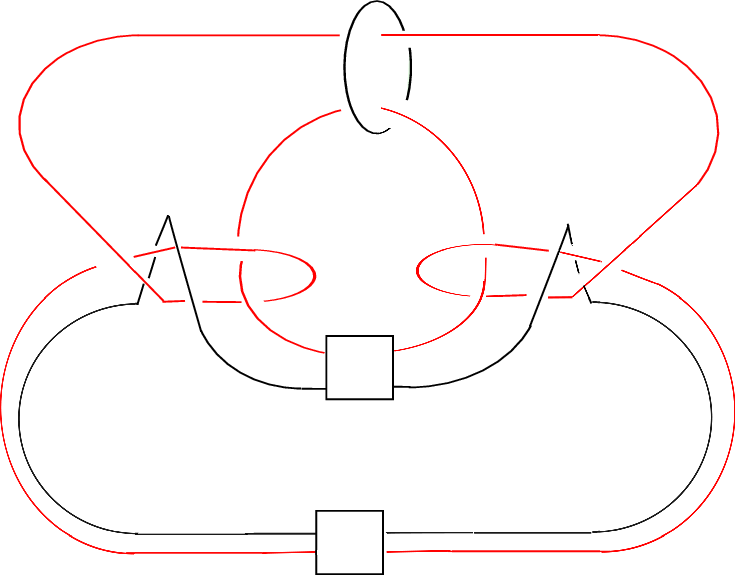}
   \label{fig:linksimplify1_5}
}
\subfigure[]
{
  \labellist
\small\hair 2pt
\pinlabel  $-n$ at 165 100
\pinlabel  $n$ at 164 20
\endlabellist
\includegraphics[scale=0.5]{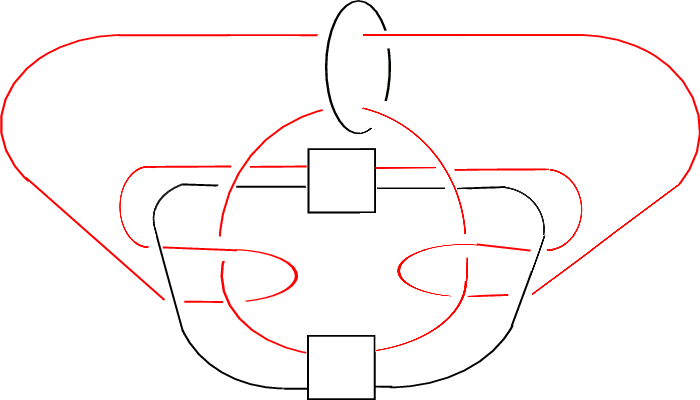}
   \label{fig:linksimplify1_6}
}
\subfigure[]
{
  \labellist
\small\hair 2pt
\pinlabel  $-n$ at 165 100
\pinlabel  $n$ at 164 20
\endlabellist
\includegraphics[scale=0.5]{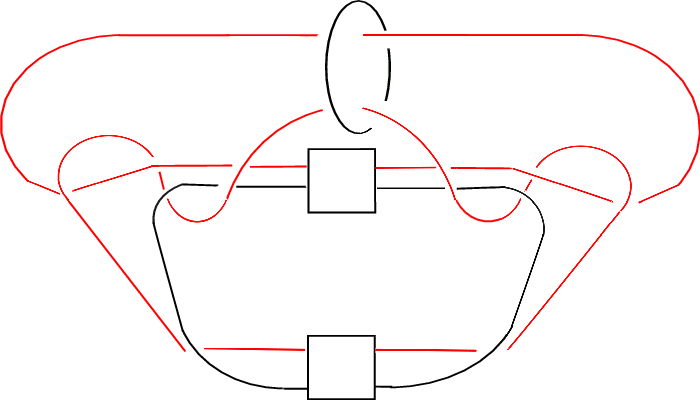}
   \label{fig:linksimplify1_7}
}
\subfigure[]
{
  \labellist
\small\hair 2pt
\pinlabel  $-n$ at 130 100
\pinlabel  $n$ at 123 20
\pinlabel  $[1]$ at 275 160
\pinlabel  $[-1]$ at 275 80
\pinlabel  $0$ at 135 220
\pinlabel  $0$ at 50 80
\endlabellist
\includegraphics[scale=0.6]{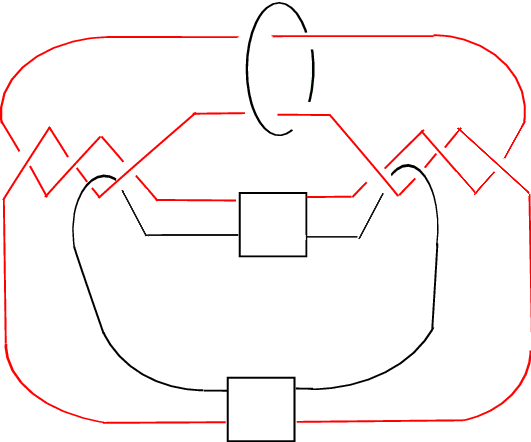}
   \label{fig:linksimplify4}
}
\caption{}
\label{fig:simplify1}
\end{figure}

\begin{figure}  
\centering
\subfigure[]
{
  \labellist
\small\hair 2pt
\pinlabel  $-n$ at 130 105
\pinlabel  $n$ at 123 22
\pinlabel  $[1]$ at 275 160
\pinlabel  $[-1]$ at 275 80
\endlabellist
   \includegraphics[scale=0.8]{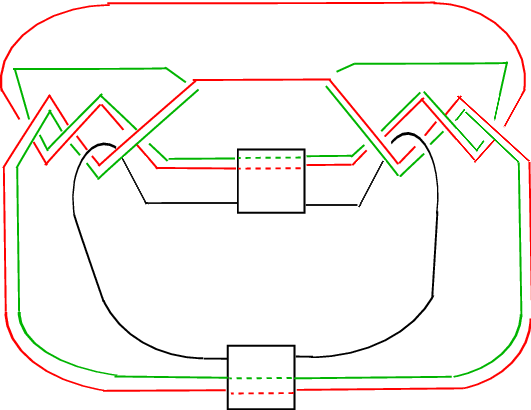}
   \label{fig:linksimplify5}
}
\subfigure[]
{
  \labellist
\small\hair 2pt
\pinlabel  $-n$ at 130 105
\pinlabel  $n$ at 125 22
\pinlabel  $[1]$ at 125 50 
\pinlabel  $[-1]$ at 128 85
\endlabellist
\includegraphics[scale=0.8]{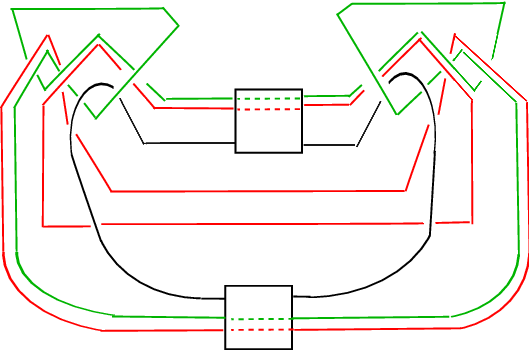}
   \label{fig:linksimplify5_2}
}
\caption{}
\label{fig:simplify2}
\end{figure}

In fact it is shown in Figure \ref{fig:simplify3} that, except for the twist boxes, both the red curves (labeled $[\pm 1]$) and the $0$-framed black curve can be simultaneously pushed onto the natural Seifert surface of the green square knot, all before the bracketed red curves are blown down!  (The apparent twist of the curves in Figure \ref{fig:linksimplify5_4} is canceled by a symmetric twist of the curves on the left side of the twist boxes.)

\begin{figure}  
\centering
\subfigure[]
{
    \includegraphics[scale=0.8]{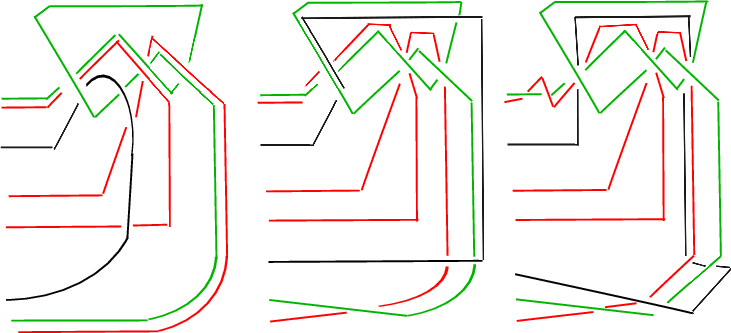}
   \label{fig:linksimplify5_4}
}
\subfigure[]
{
  \labellist
\small\hair 2pt
\pinlabel  $-n$ at 125 105
\pinlabel  $n$ at 120 22
\pinlabel  $[1]$ at 125 50
\pinlabel  $[-1]$ at 123 85
\pinlabel  $0$ at 255 50
\pinlabel  $0$ at 215 50
\endlabellist
\includegraphics[scale=0.8]{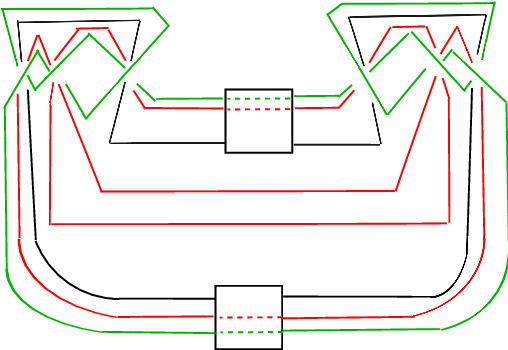}
   \label{fig:linksimplify6}
}
\caption{}
\label{fig:simplify3}
\end{figure}

When the square knot is straightened out, it appears as in Figure \ref{fig:linksimplify7}, which bears a striking resemblance to the earlier Figure \ref{fig:squaremon}.  In particular, if we temporarily ignore the $0$-surgered black curve (so we can also ignore the twist boxes), then the two red circles are parallel to each other in the {\em complement} of the square knot.  That is, there is an annulus $A \subset S^3 - Q$ whose boundary consists of the two red circles.  This can be seen directly in Figure \ref{fig:linksimplify7}, but it also follows from the discussion surrounding Figure \ref{fig:squaremon}: one red circle is the image under the monodromy of the other.  Since the red circles have opposite (bracketed) signs, it follows that blowing both of them down simultaneously has no effect on the square knot: it persists after the blow-down, but any arc that intersects the annulus $A$ between the red curves will be twisted around the core of $A$.  In particular, the $0$-surgered black curve intersects $A$ $n$ times at each of the upper and lower twist boxes, so the simultaneous blowdowns change Figure \ref{fig:linksimplify7} to Figure \ref{fig:linksimplify8} via a process akin to that shown in Figure \ref{fig:prehexstep2}.  Finally, Figure \ref{fig:linksimplify8} can be isotoped to Figure \ref{fig:prehexsquare5}, at which point we rejoin the previous argument. 

\begin{figure}
     \labellist
\small\hair 2pt
\pinlabel  $n$ at 220 45
\pinlabel  $-n$ at 225 290
\pinlabel  $[1]$ at 405 210
\pinlabel  $[-1]$ at 400 100
\endlabellist
 \centering
    \includegraphics[scale=0.5]{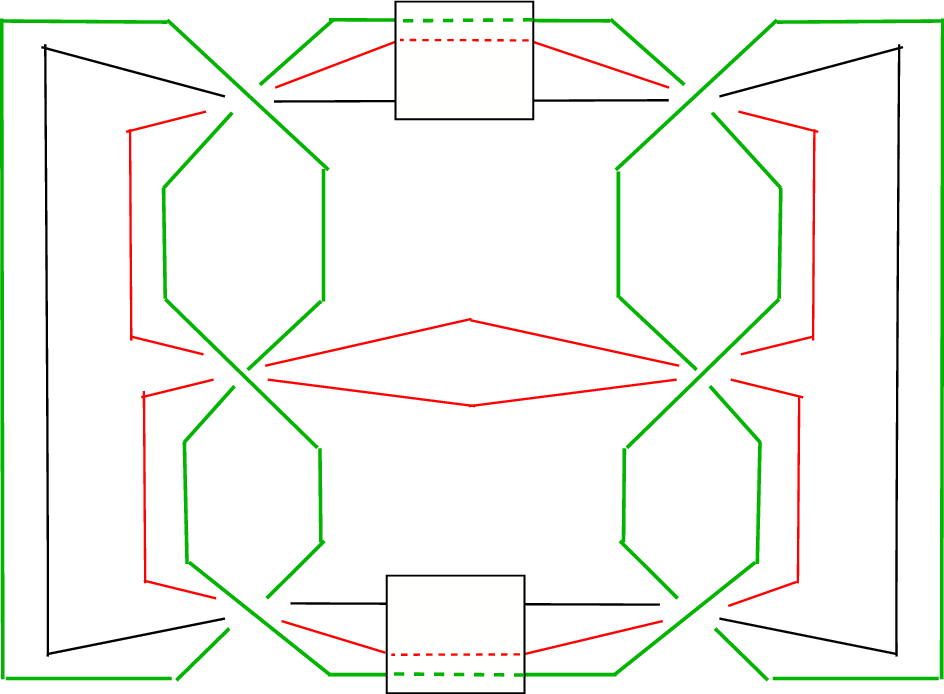}
    \caption{} \label{fig:linksimplify7}
    \end{figure}

\begin{figure}
 \centering
    \includegraphics[scale=0.5]{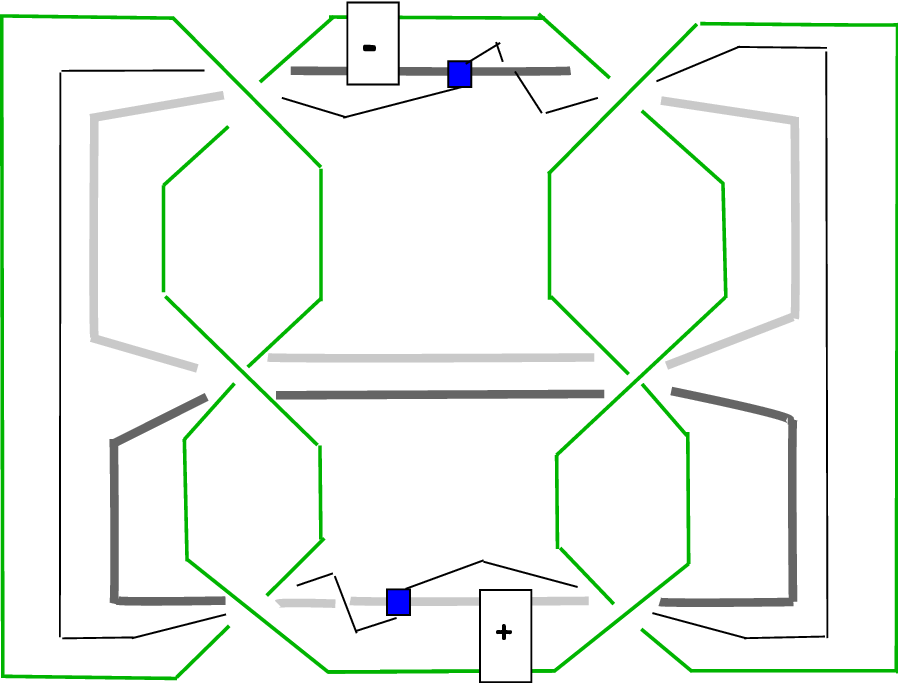}
    \caption{} \label{fig:linksimplify8}
    \end{figure}

\section{Identifying the torus $T \subset M$} \label{sect:torus}

It is unsatisfying that in both views of the construction above it is hard to identify the torus $T \subset M$, whose critical properties are listed in the introductory section.  $T$ appears in \cite[Figure 19]{GST} (here in Figure \ref{fig:GSTFig19}), but the appearance is well before the bracketed red curves are blown down, and it is hard to track $T$ through that operation.  The most obvious torus in $M$ is the swallow-follow $T_{sf}$ torus in $S^3 - Q$, which is also the mapping torus of the green circle in Figure \ref{fig:Mmon}.  This torus does indeed intersect $V_0$ in two points, but a little experimentation shows that Dehn twisting $V_0$ along curves in this torus produces links much simpler than the $V_n$ (in fact mostly links obtained from $Q \cup V_0$ just by twisting $Q$, as described in the remarks before Theorem \ref{thm:class}).  

Here is a way to see a more complicated candidate for the torus $T$:  Just as the trefoil knot contains a spanning M\"obius band, the manifold $M$ contains an interesting Klein bottle, the mapping cylinder of the six brown arcs in Figure \ref{fig:tor1a}.  Other tori in $M$ can be obtained from $T_{sf}$ by Dehn twisting it along the Klein bottle.  (This must be done in the direction of the  curve in the Klein bottle whose complement is orientable, in order for the operation to makes sense).  An example is shown in Figure \ref{fig:tor1b}:  the union of the twelve green arcs is a circle that is preserved, with its orientation, by the monodromy, so its mapping cylinder is a torus $T$ in $M$. We now verify that $T$ is the torus we seek.

\begin{figure}
\centering
\subfigure[]
{
  \includegraphics[scale=0.41]{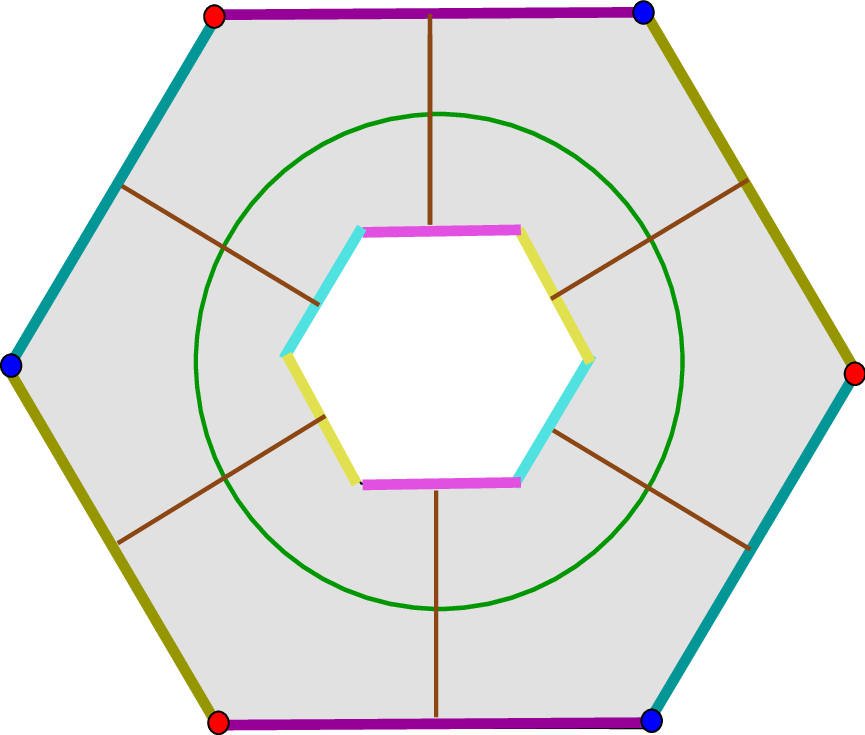} \label{fig:tor1a}
}
\subfigure[]
{
 \includegraphics[scale=0.41]{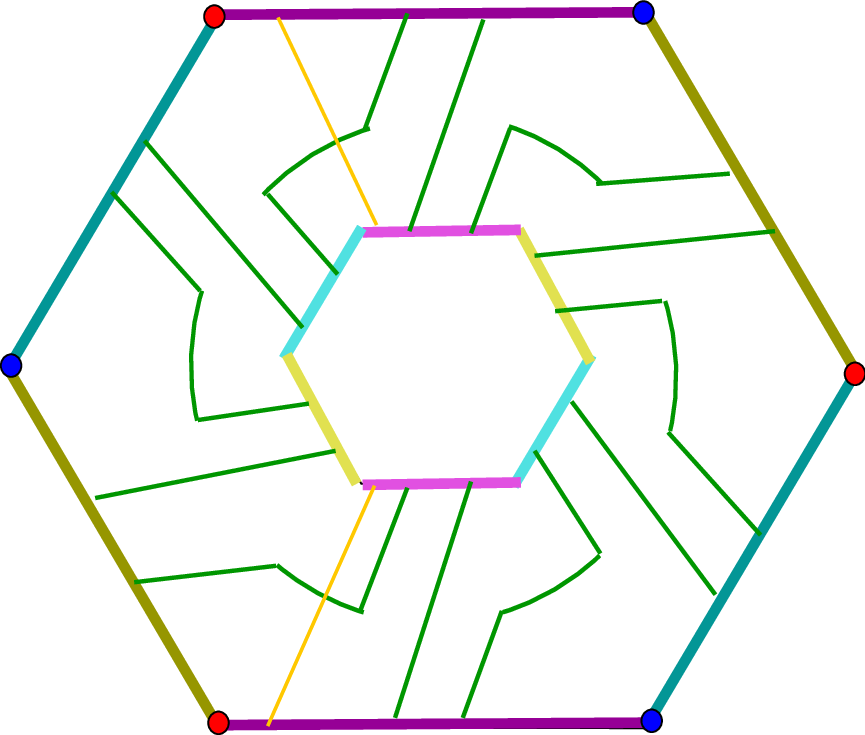} \label{fig:tor1b}
}
\caption{}
\label{fig:tor1ab}
\end{figure}

\begin{figure}
 \centering
    \includegraphics[scale=0.5]{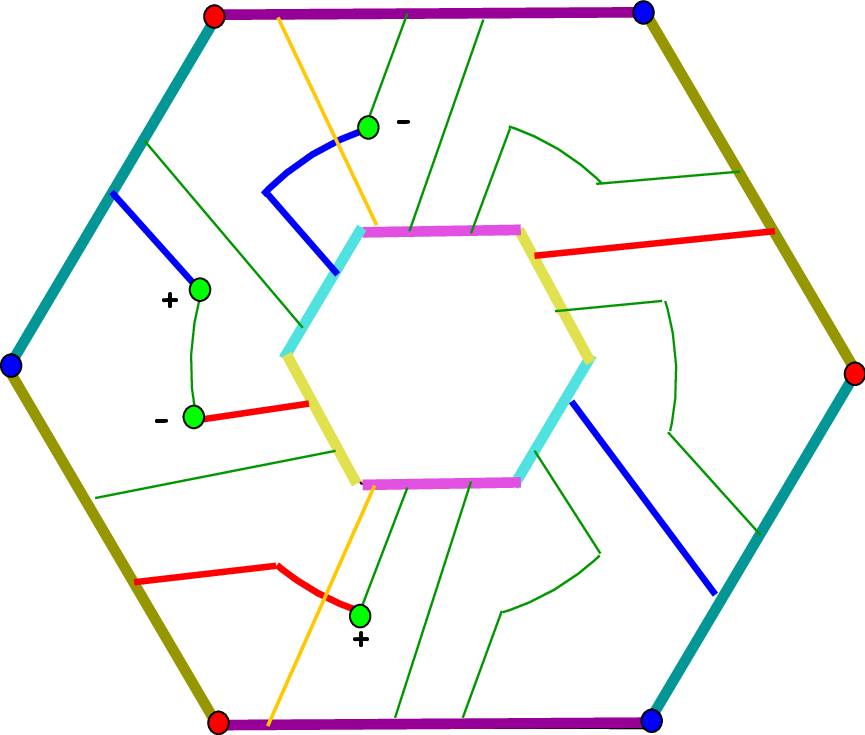}
    \caption{} \label{fig:tor1c}
    \end{figure}

The yellow curve in Figure \ref{fig:tor1b} is the original component $V_0$; it intersects $T$ twice.  Figure \ref{fig:tor1c} shows (in red 
and blue) two parallel simple closed curves on $T$:  As was the convention in Section \ref{sect:hex} (see also Figure \ref{fig:xsection}) imagine both the blue  arc and the red arc lying in $F_\cup \times \{ \frac12 \} \subset F_\cup \times I \subset M$.  A green dot at the end of each arc is labeled $\pm$ and represents in each case a vertical arc that ascends (resp. descends) to $F_\cup \times \{ 1 \}$ (resp $F_\cup \times \{ 0 \}$).  The two ends of the blue arc (and similarly the two ends of the red arc) are then identified in $M$ by the monodromy.  The 
framing of these curves given by the normal direction to $T$ is clearly the ``blackboard framing" given by the figure, so Dehn twisting 
along $T$ in the direction of these curves can be visualized by widening the blue and red arcs into bands, and then Dehn twisting along 
the bands.  Finally, visibly identify the ends of the blue band (and the ends of the red band) by altering the monodromy near the 
central circle, as was done in Figure \ref{fig:Qmon}, to get red and blue annuli.  The result is Figure \ref{fig:tor2a}, which uses train-track merging to show the direction of the Dehn twisting.  The result is essentially identical to that shown in Figure \ref{fig:hexp1} and can be made 
identical by flipping both red and blue annuli along their core curves (see Figure \ref{fig:tor2b}).  Why the colored annuli have 
framing $\pm 1$ in $S^3$ (that is, before $0$-surgery is done on $Q$ to create $M$) is explained briefly  in 
Section \ref{sect:hex} via Figure \ref{fig:bookopen}.

\begin{figure}
\centering
\subfigure[]
{
  \includegraphics[scale=0.4]{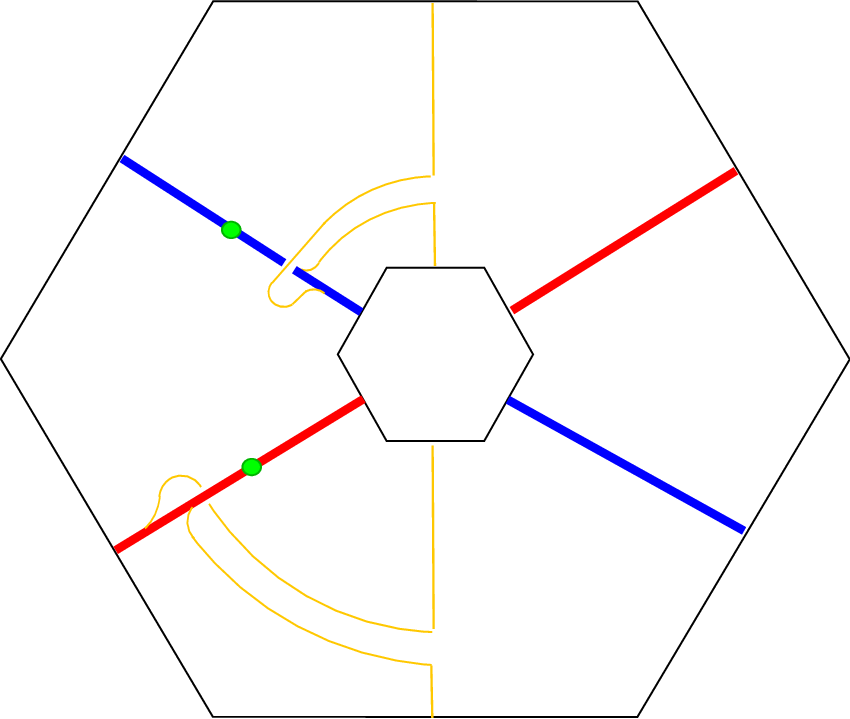} \label{fig:tor2a}
}
\subfigure[]
{
 \includegraphics[scale=0.4]{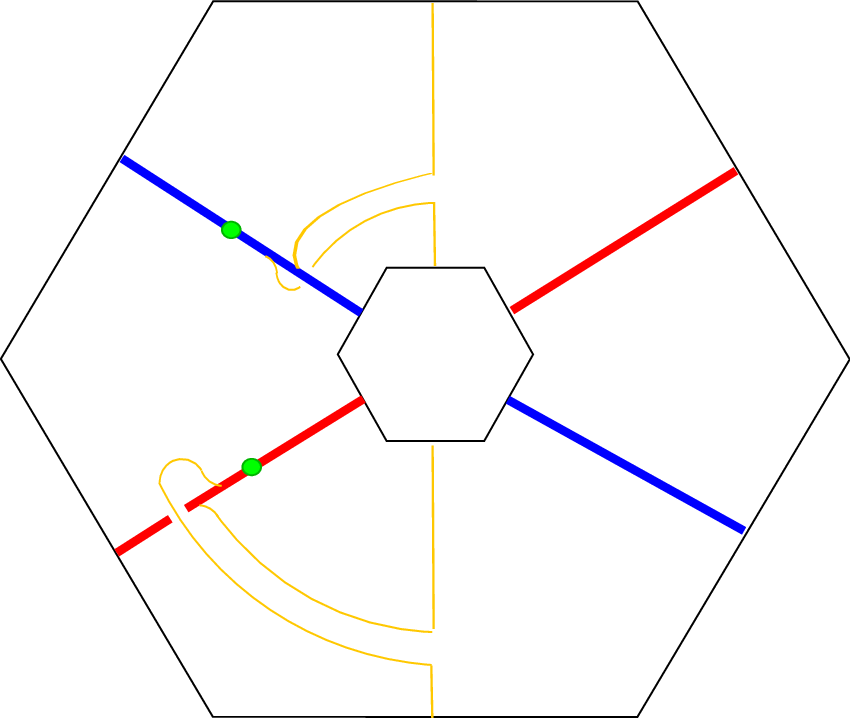} \label{fig:tor2b}
}
\caption{}
\label{fig:tor2ab}
\end{figure}

This shows that constructing the links $L_n$ is simple.  The fact that all these 
$L_n$ satisfy Weak Generalized Property R requires only the argument in \cite[Section 10]{GST}.  In the next section we show directly that a natural presentation of the trivial group  given by the handle-structure of the $4$-manifold cobordism between $S^3$ and $\#_2(S^1 \times S^2)$ is
$$<a, b\;|\;aba = bab, a^n = b^{n+1}>$$
and so $L_n$ is unlikely to satisfy Generalized Property R, on Andrews-Curtis grounds.   This direct calculation makes it possible to sidestep the full complexity of the 
original construction in \cite{GST}, with its roots in \cite{Go}.  

\section{How the cobordism presents the trivial group}\label{sect:present}

The $4$-manifold cobordism $W$ between $S^3$ and $\#_2(S^1 \times S^2)$ given by surgery on $L_n = Q \cup V_n$ is decomposed by the $3$-manifold $M$ into two pieces:  
\begin{itemize}
\item the cobordism between $S^3$ and $M$ obtained by $0$-framed surgery on the square knot $Q$ and
\item the cobordism between $M$ and $\#_2(S^1 \times S^2)$ given by further surgery on the curve $V_n$, lying in the fiber as described above, with framing given by the fiber.
\end{itemize}
Let $W_L$ denote the cobordism between $\bdd_0 W_L = \#_2(S^1 \times S^2)$ and $\bdd_1 W_L = M$. and  $W_Q$ denote the cobordism between $\bdd_0 W_Q = M$ and $\bdd_1 W_Q = S^3$.

\subsection{A natural presentation of $\pi_1(W_L)$.}

The cobordism $W_L$ consists of a single $2$-handle attached to a collar of $\#_2(S^1 \times S^2)$.  A presentation for its fundamental group is naturally obtained in two steps:
\begin{enumerate}
\item Choose a non-separating pair of normally oriented non-parallel $2$-spheres $S_a, S_b \subset \bdd_0 W_L$.  
\item  Write down the word $r$ in letters $a, \overline{a}, b, \overline{b}$ determined by the order and the orientation with which the attaching circle for the $2$-handle intersects the spheres $S_a, S_b$.
\end{enumerate}
Then the presentation is simply $$\pi_1(W_L) = <a, b \;| \; r>.$$
Different choices of $2$-spheres will give rise to different relators $r$, hence different presentations.  But once these $2$-spheres are chosen, the word $r$ is determined (up to conjugation) by the free homotopy class of the attaching circle for the $2$-handle.

\begin{lemma} The construction above gives a natural choice of $2$-spheres $S_a, S_b \subset \bdd_0 W_L$ so that the associated presentation is
$$\pi_1(W_L) = <a, b\;|\;aba = bab>.$$
\end{lemma}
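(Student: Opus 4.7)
The plan is to choose the natural $2$-spheres $S_a,S_b\subset\#_2(S^1\times S^2)$ coming directly from the construction of $\#_2(S^1\times S^2)$ as $0$-surgery on $L_n=Q\cup V_n$, then to trace the $2$-handle's attaching circle through these spheres and read off the relation.

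For the spheres, observe that the fiber surface $F_\cup$ (capped-off Seifert surface of $Q$) splits naturally along the sum sphere into two halves $F_\ell$ and $F_r$ corresponding to the trefoil summands $Tr_\pm$ of $Q$. After the further $V_n$-surgery, each half ought to compress to a $2$-sphere in $\#_2(S^1\times S^2)$: I expect $S_a$ to be the compressed form of $F_r$ and $S_b$ the compressed form of $F_\ell$, with the compressing disks being pieces of the surgery $2$-handle cocore together with the sum sphere between the trefoil summands. These are the $2$-spheres visible directly in Figure \ref{fig:squaremon} as the curves of the monodromy structure.

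For the attaching circle, note it is the core of the new solid torus that replaces $\nu(V_n)$ under the surgery, and may be represented by a small meridional loop to $V_n$ in $M$ pushed through the surgery region. Using the picture of $V_n$ on the hexagonal fiber $F_\cup$ (Figure \ref{fig:finalrest}), I would isotope this loop so that it traces out a word in $\pi_1(\#_2(S^1\times S^2))=F_2=\langle a,b\rangle$ determined by how the dual to $V_n$ threads through the two halves corresponding to $Tr_\pm$. Counting signed intersections in cyclic order I expect to see exactly six crossings, reading off the word $r=aba\,b^{-1}a^{-1}b^{-1}$, equivalently the trefoil braid relation $aba=bab$.

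The main obstacle is twofold. First, I must verify that the chosen $S_a,S_b$ really are $2$-spheres (non-separating and of genus zero) in $\#_2(S^1\times S^2)$: this requires checking that the relevant compressions collapse each genus-one half of $F_\cup$ all the way down to $S^2$, and in particular that the appropriate curves bound disks in $\#_2(S^1\times S^2)$ rather than just in its $H_2$. Second, I must verify the $n$-independence of the word: although $V_n$ varies with $n$, the fact that $V_{n+1}$ is obtained from $V_n$ by a Dehn twist along a curve in the torus $T$ of Section \ref{sect:torus} should mean that the attaching circle's free homotopy class in $\pi_1(\#_2(S^1\times S^2))$, once $S_a,S_b$ are fixed, either does not change or changes only by conjugation. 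This $n$-independence is precisely what allows the same relation $aba=bab$ to appear for every $L_n$, and is what makes the subsequent $n$-dependent second relation $a^n=b^{n+1}$ (arising from the $W_Q$ handle) carry the full distinction between the members of the family.
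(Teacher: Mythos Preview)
Your proposed spheres are not the ones that work, and this is a genuine gap rather than a detail to be filled in.  You take $S_a, S_b$ to be the compressed halves $F_r, F_\ell$ of the fiber, split along the separating circle between the two trefoil summands.  But that separating circle meets $V_n$ in many points (indeed $V_n$ projects to a curve of slope $\frac{n}{2n+1}$ in $P$, so it cannot lie in either half), and the only new disks created by the $V_n$--surgery are meridian disks bounded by push-offs of $V_n$ itself.  There is no evident compressing disk for the genus-one pieces $F_\ell, F_r$ in $\partial_0 W_L$, so your candidates are not visibly spheres at all.  Your appeal to Figure~\ref{fig:squaremon} actually points toward a different construction (the monodromy orbit of a curve), not the $F_\ell/F_r$ split you describe.

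The paper's construction is based on exactly that monodromy orbit.  The three disjoint curves $V_n,\ \rho(V_n),\ \rho^{-1}(V_n)$ cut $F_\cup$ into two \emph{pairs of pants} $P_a, P_b$; each boundary circle of $P_a$ or $P_b$ is either $\partial A$ (which bounds a meridian disk of the surgery solid torus) or is carried to $\partial A$ by a vertical annulus in the mapping-torus structure, and so also bounds a disk in $\partial_0 W_L$.  Capping off gives honest spheres $S_a,S_b$.  The relator is then read directly from how a meridian of $\partial N$ meets $P_a, P_b$ in the infinite cyclic cover, giving $aba\overline{b}\overline{a}\overline{b}$.  For the $n$--independence the paper does not argue via Dehn twists on the relator; rather it observes that the $n$-fold twist about $T$ is a self-homeomorphism of $M$ carrying $V_0$ to $V_n$, so the entire picture (and hence the presentation of $\pi_1(W_L)$) is literally the same for every $n$.
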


\begin{proof}  Somewhat counterintuitively, the key to finding the relevant $2$-spheres is to consider the properties of the curve $V_n$ lying in a fiber $F_\cup \subset M$, for $W_L$ can be viewed (dually) as the cobordism obtained by attaching a $2$-handle to $M$ along $V_n$.   Put another way: 
observe that $\bdd_0 W_L = \#_2(S^1 \times S^2)$ is obtained from $M$ by replacing a tubular neighborhood $N$ of $V_n$ by a solid torus $N'$ whose meridian circle is parallel in $\bdd N$ to the curve  $\bdd N \cap F_\cup$, where $F_\cup$ is the fiber of $M$ on which $V_n$ lies.  

Recall the structure of $M$ from Section \ref{sect:hex}:  it is the mapping torus of a monodromy $\rho: F_\cup \to F_\cup$ which can be viewed as $\pi/3$ rotation on a hexagon in which opposite sides have been identified to create the genus two surface $F_\cup$.  See Figures \ref{fig:Mmon} and \ref{fig:rhosigma}.  An important feature is that the three curves $V_n, \rho(V_n), \rho^{-1}(V_n)$ are all disjoint in $F_\cup$ and together the three curves divide $F_\cup$ up into two pairs of pants, $P_a$ and $P_b$.  It is these pairs of pants,  capped off by meridian disks of $N'$, that will constitute the spheres $S_a, S_b \subset \bdd_0 W_L$.  

To be a bit more concrete, let $A \subset F_\cup$ be the annulus neighborhood $N \cap F_\cup$ of the curve $V_n$ in the fiber $ F_\cup$.  Then $\rho(A)$ and $\rho^{-1}(A)$ are disjoint from each other and from $A$.  Denote their complementary components in $F_\cup$ by $P_a$ and $P_b$, each a pair of pants.  Once $N$ is replaced by $N'$ to get $\bdd_0 W_L$, each component of $\bdd A$ bounds a meridian disk in $N'$.  On the other hand, the mapping cylinder structure $$M \cong F_\cup \times I/(x,1) \sim (\rho(x), 0)$$ shows that each component of $\bdd \rho(A)$ or $\bdd \rho^{-1}(A)$ is also parallel (via a vertical annulus) to a meridian circle in $\bdd N'$.  These vertical annuli describe how each boundary component of $P_a$ (resp. $P_b$) can be capped off by a disk in $\bdd_0 W_L$ to create spheres $S_a$ and $S_b$.  

\begin{figure}
 \centering
    \includegraphics[scale=0.8]{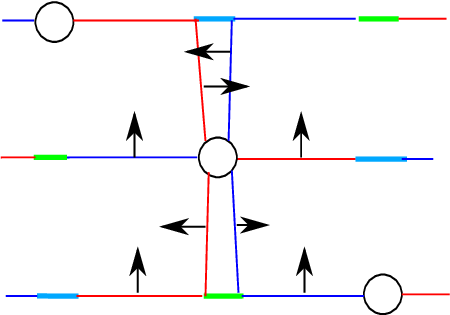}
    \caption{} \label{fig:sasb}
 \end{figure}

To illustrate, Figure \ref{fig:sasb} shows a schematic picture of the infinite cyclic cover of $M$.  The three horizontal lines represent three lifts of the fiber $F_\cup$, which we might think of as $\rho(F_\cup), F_\cup$ and $\rho^{-1}(F_\cup)$.  The circle in each fiber represents a lift of $V_n$; the short horizontal green and aqua segments represent lifts of spanning arcs of the annuli $\rho(A)$ and $\rho^{-1}(A)$.  The red lines represent lifts of $P_a$ and the blue lines represent lifts of $P_b$.  The focus is on the central circle, the order in which copies of $P_a$ and  $P_b$ occur around that circle, and how $P_a$ and $P_b$ are normally oriented by a choice of normal orientation on $F_\cup$.  The circle represents a meridian circle of $\bdd N$, so the order and orientation of intersections with $P_a \subset S_a$ and $P_b \subset S_b$ gives the relation $r$ in the presentation we seek.  If we start by reading just below the circle and proceed clockwise the relation is $$aba\overline{bab} \implies aba = bab$$ as predicted.  Keeping in mind that $P_a, P_b$ are connected surfaces in $M$ itself, it's easy to see how to find a simple closed curve that is disjoint from $P_b$ (resp $P_a$) and intersects $P_a$ (resp $P_b$) in a single point. (For example, in Figure \ref{fig:sasb}, the upper-right (northeast) quadrant of the meridian circle of $\bdd N$ is an arc whose endpoints lie near and on the same side of $P_a$ and which intersects $P_b$ in a single point.) It follows that $S_a$ and $S_b$ are non-parallel and that their union is non-separating, as we require.  

In the argument above it may appear that the schematic diagram (at least) depends on taking $n = 0$.  It is certainly easier to understand the picture for the special case in which the surgery curve is $V_0$.  But notice that there is a natural homeomorphism of $M$ to itself which takes $V_0$ to $V_n$, namely twisting $n$ times about the torus $T$ defined in Section \ref{sect:torus}, so the presentation given by the handle structure on $W_L$ is the same, regardless of $n$.  
\end{proof}

\subsection{The relator from $W_Q$.}

When $W_Q$ is attached to $W_L$ along $M$ a relator is added.  It is determined by a meridian circle $\mu$ of the square knot as it appears in $\bdd_1 W_Q = S^3$.  In $M$ $\mu$ is represented by an arc in $F_\cup \times I$ that connects $point \times \{ 1 \}$ to $\rho(point) \times \{ 0 \}.$  In the schematic Figure \ref{fig:sasb} the lift of $\mu$ is covered by the tilted arc shown in Figure \ref{fig:Qrelator}.  For the illustrated case $n = 0$, the circle is seen to represent the relator $b\overline{a}a$ or simply $b$.  

\begin{figure}
 \centering
    \includegraphics[scale=0.8]{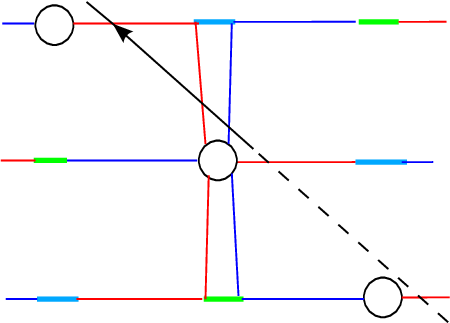}
    \caption{} \label{fig:Qrelator}
 \end{figure}
 
The general case, in which $V_0$ becomes $V_n$ by twisting around a torus $T$ (see Section \ref{sect:torus}) it is quite complicated to calculate how the twisting alters the way in which $P_a$ and $ P_b$ (and so $S_a$ and $ S_b)$ intersect $\mu$.  The reader is invited to try the calculation directly in, say, Figure \ref{fig:deg9ex}. Rather than trying to understand how twisting around $T$ affects $P_a$ and $P_b$, so that we might calculate the resulting intersections of $\mu$ with $S_a$ and $S_b$, we will instead leave  $V_0, P_a, P_b$ unchanged, but twist the curve $\mu$ around $T$ and determine how the twisted $\mu$ intersects the unchanged $P_a$ and $P_b$.  

The first observation is that the schematic presentation of $P_a, P_b$ given in Figures \ref{fig:sasb} and \ref{fig:Qrelator} is in fact an accurate depiction of how the swallow-follow torus $T_{sf}$ in $M$ (i. e. the mapping torus of the circle depicted in Figure \ref{fig:tor1a}) intersects the curve $V_0$ and the planar surfaces $ P_a$ and $P_b$.  To be specific, Figure \ref{fig:PcapT} shows how $V_0$, $ P_a$ and $P_b$ intersect the lift $\tilde{T}_{sf} $ of $T_{sf}$ to the infinite cyclic cover of $M$ .  (Identify the right side of the figure with the left to get $\tilde{T}_{sf} \cong S^1 \times \R$.)
The torus $T$ is obtained by twisting $T_{sf}$ along the Klein bottle $K$, as described in Section \ref{sect:torus}.  Since $K$ can be made disjoint from all three curves $V_0, \rho(V_0)$ and $\rho^{-1}(V_0)$,  Figure \ref{fig:PcapT} likewise depicts the pattern of intersection of  the lift $\tilde{T}$  with $V_0$, $P_a$ and $P_b$. 

\begin{figure}
 \centering
    \includegraphics[scale=0.8]{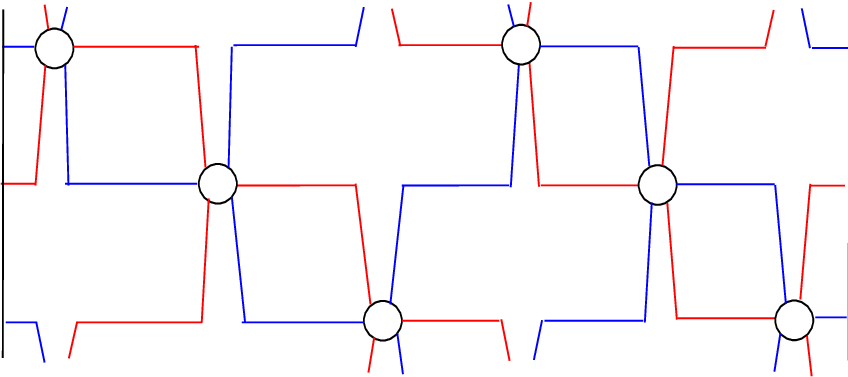}
    \caption{} \label{fig:PcapT}
 \end{figure}
 
 Now consider two adjacent vertical faces in the $hexagon \times I$ from which $M$ was constructed in Section \ref{sect:hex}.  Pick, say, the faces whose tops appear as the two left-most edges of the hexagon in Figure \ref{fig:Mmon}.  Figure \ref{fig:walls} depicts these two faces; the monodromy identifies the top of the left face with the bottom of the right face.  The gray vertical bars represent the intersection of a bicollar of the torus $T$ with these faces; the bars have opposite normal orientation (see Figure \ref{fig:tor1b}).  The purple diagonal represents the meridian $\mu$.  We are trying to see how $\mu$ intersects $P_a$ and $P_b$ when $\mu$ is twisted along $T$ as $\mu$ passes through the bicollar.  The slope at which $\mu$ is twisted along $T$ is derived easily from Figure \ref{fig:tor1c}: in words, a single twist, corresponding to $n = 1$ will move a  point vertically from the bottom of $F \times I$ to the top of $F \times I$ as it moves the point horizontally $\frac16$ of the way around $T$. 

\begin{figure}
 \centering
    \includegraphics[scale=0.8]{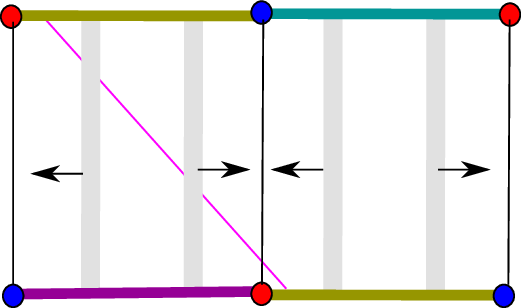}
    \caption{} \label{fig:walls}
 \end{figure}
 
Suppressing a bit of detail, Figure \ref{fig:twisting} shows the resulting trajectory of $\mu$ during its two arcs of passage through the bicollar of the torus (i. e. the two gray bands in Figure \ref{fig:walls}).  The case depicted is $n = 2$; higher values of $n$ would give longer purple arcs in Figure \ref{fig:twisting}.  The two purple arcs are oriented in opposite directions in the figure since $T$ has opposite normal orientation at the two points at which $\mu$ passes through the bicollar.  The word that results from twisting $\mu$ as it passes through the bicollar can now be read off:  $bb^{n}\overline{a}^n = 1$ or $a^n = b^{n+1}$. The first $b$ comes from the intersection of $\mu$ with the bottom of $F \times I$ in Figure \ref{fig:walls}.

\begin{figure}
 \centering
    \includegraphics[scale=0.8]{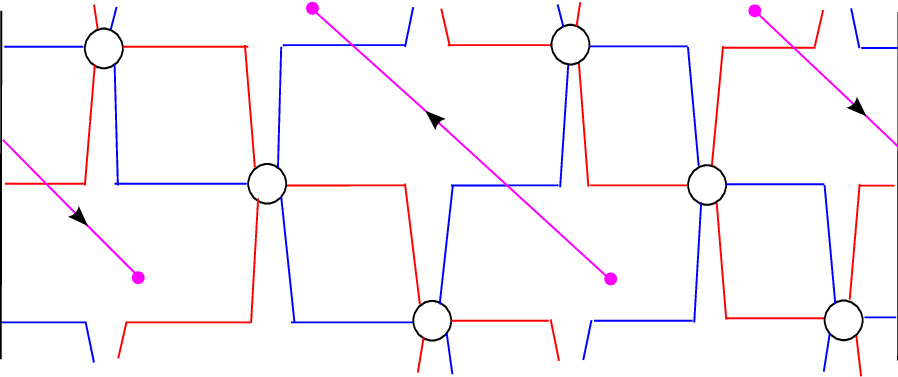}
    \caption{} \label{fig:twisting}
 \end{figure}

The final presentation for the fundamental group of the cobordism is then
$$\pi_1(W) = <a, b\;|\;aba = bab, a^n = b^{n+1}>,$$ as expected.  

\section{Understanding $T \cap S^3$}\label{sect:TinS3}

Recall that $M$ is obtained from $S^3 - \eta(Q)$ by attaching a solid torus to $\bdd \eta(Q)$ with framing zero.  The torus $T \subset M$ described in Section \ref{sect:torus} passes through the added solid torus in two meridians.  This can 
be seen from Figure \ref{fig:tor1b}: An arc connecting a generic point in $F_\cup$ to its image under the monodromy (i. e. $\pi/3$ rotation of the hexagon) will intersect $T$ twice; and the union of such an arc with the mapping torus of the generic 
point represents the core of the attached solid torus  To put it another way, $T \cap (S^3 - \eta(Q))$ is a twice-punctured torus whose boundary circles are longitudes of $Q$.  Visualizing $T$ in $S^3 - \eta(Q)$ is not easy.  

We begin with trying to 
understand how the Klein bottle $K$ used in the construction of $T$ intersects $S^3 - \eta(Q)$.  In outline, this is easy to see:  the three colored circles in Figure \ref{fig:squaremon} are cyclically permuted by the monodromy and the mapping 
torus of this action (whose cube is orientation reversing) defines the Klein bottle.  But the argument above, applied to $K$, shows that $K$ passes once through the added solid torus; this outline masks how this happens, since it ignores the effect 
shown in Figure \ref{fig:Qmon}.  The actual monodromy on $F \subset (S^3 - \eta(Q))$ and how $K$ is determined by this monodromy can be understood by starting with Figure \ref{fig:posthexsquare1} which, when doubled by reflection along its right side, gives all of $F$.  The Klein bottle $K$ intersects $F$ in Figure \ref{fig:posthexsquare1} in the red, blue and green arcs.  

If Figure \ref{fig:posthexsquare1} were completed to a trefoil knot by adding a line on the right, as shown in red on the top left of Figure \ref{fig:KleinS31}, the effect of the monodromy would be relatively easy to see as a screwing motion along a vertical axis.  The trajectory of a typical point is shown by the dotted arrow; the effect of the monodromy on $F$ is shown on the top right of Figure \ref{fig:KleinS31}.  But notice that the red line in the trefoil knot has been moved; it can be moved back to its previous location, so the picture can be doubled to give $F$, by a twist on a collar of the trefoil knot.  This final monodromy is shown on the lower left of Figure \ref{fig:KleinS31}.  Recall how in Figure \ref{fig:posthexsquare1}, $K$ was isotoped off the capping off disk, so that $K \cap F$ consists of three circles instead of six arcs. The same thing can be accomplished after the mondromy; the result is shown on the bottom right of Figure \ref{fig:KleinS31}.  Notice that in $F$ the monodromy has moved the red and green circles to where respectively the green and blue circles were in Figure \ref{fig:posthexsquare1}.  The blue circle is now much more complicated,

\begin{figure}
 \centering
    \includegraphics[scale=0.4]{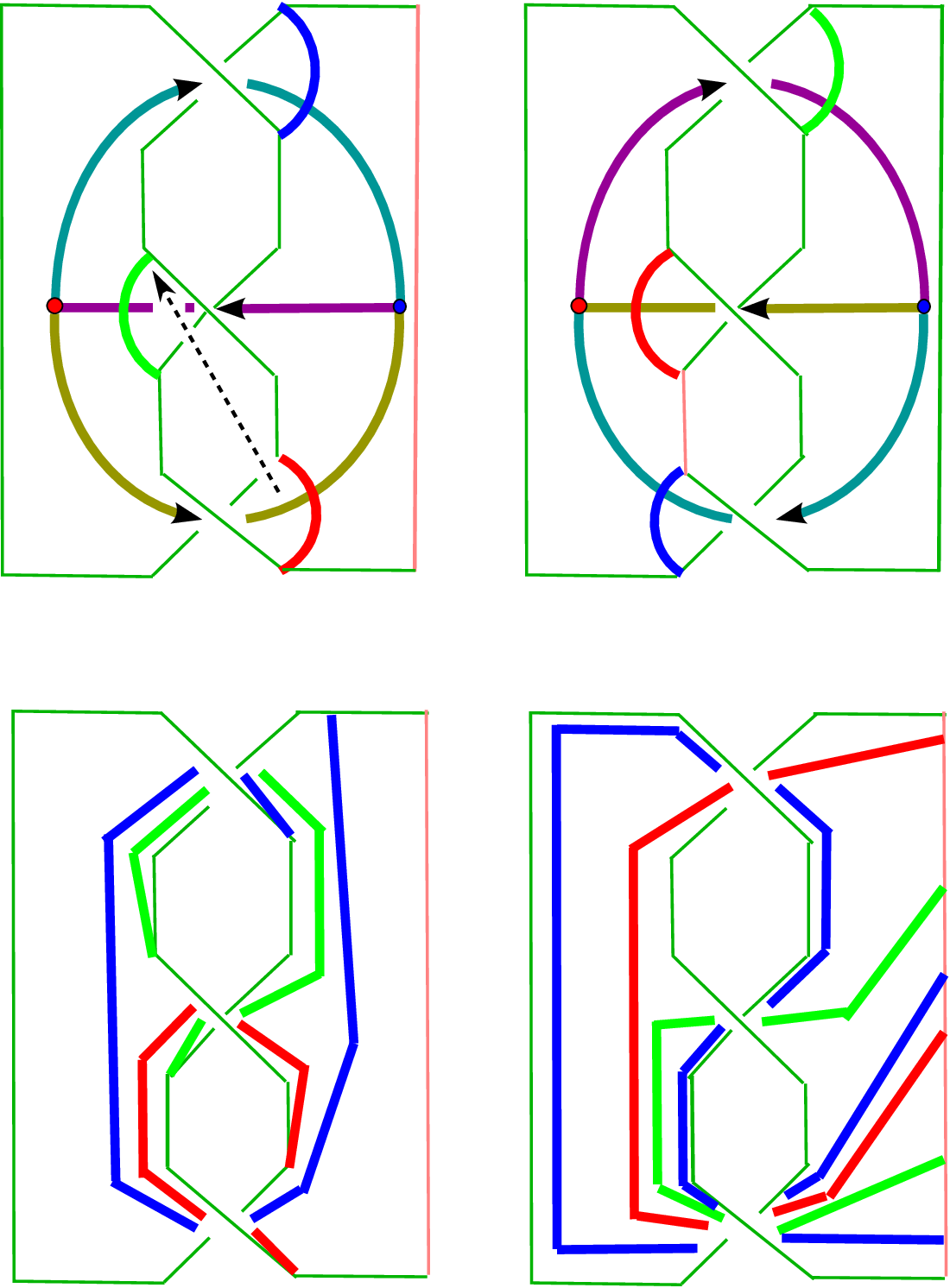}
    \caption{} \label{fig:KleinS31}
 \end{figure}
 
  Recall how in Figure \ref{fig:posthexsquare1}, $K$ was isotoped off the capping off disk, so that $K \cap F$ consists of three circles instead of six arcs. The same thing can be accomplished after the mondromy; the result is shown on the bottom right of Figure \ref{fig:KleinS31}.  Notice that in $F$ the monodromy has moved the red and green circles to where respectively the green and blue circles were in Figure \ref{fig:posthexsquare1}.  The blue circle in $F$ is now much more complicated, but sliding it once over a meridian of the solid torus that is attached to $Q$ with framing $0$, as illustrated in Figure \ref{fig:KleinS32}, isotopes the blue circle to the same position as the red circle before the mondromy.  

\begin{figure}
 \centering
    \includegraphics[scale=0.35]{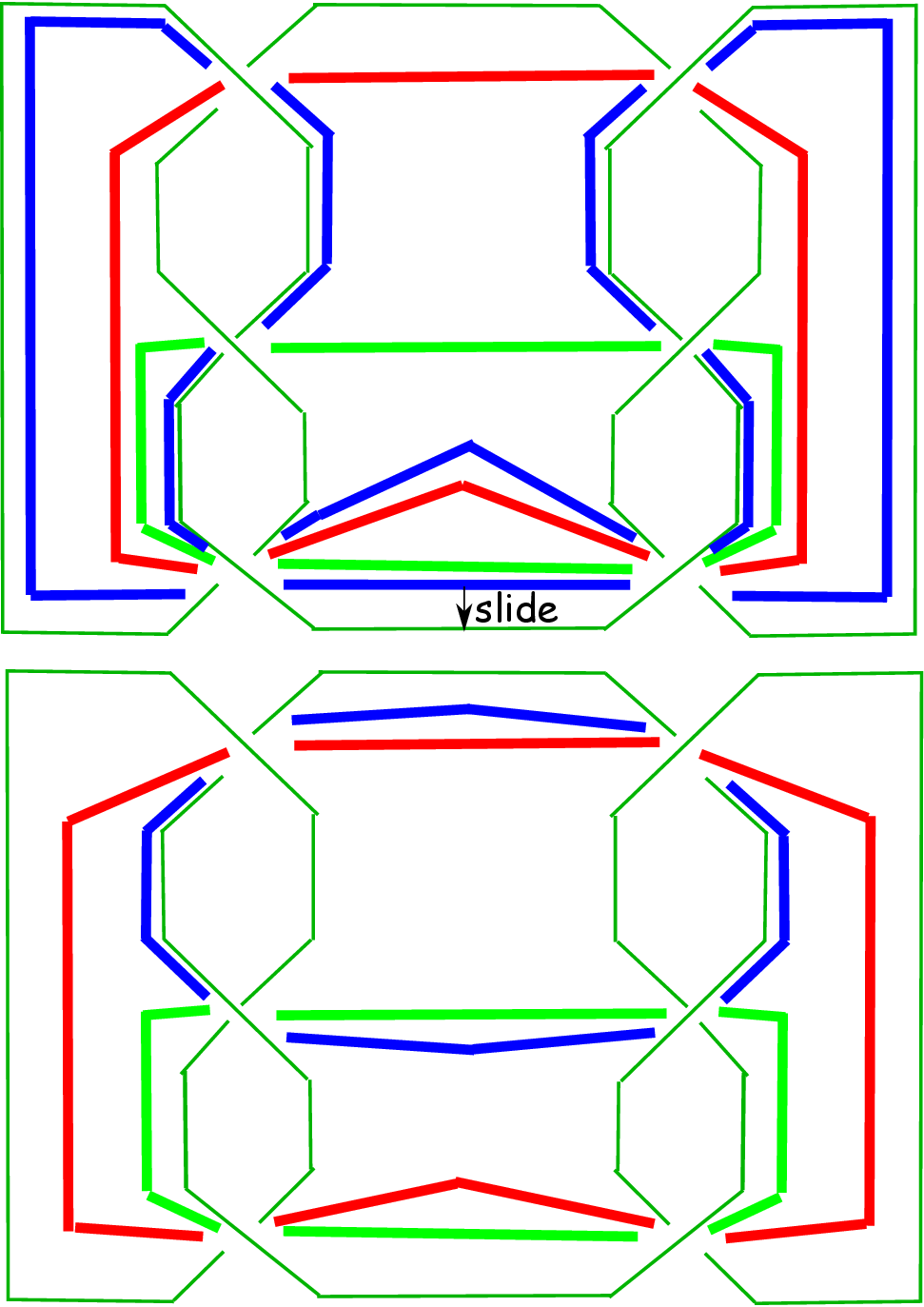}
    \caption{} \label{fig:KleinS32}
 \end{figure}
 
 It will be useful to have tracked how the monodromy moves a point near the Klein bottle.  This is illustrated in Figure \ref{fig:KleinS33}.  The aquamarine dot is tracked through all the moves involved in understanding the monodromy, moving first backwards from Figure \ref{fig:posthexsquare2} to Figure \ref{fig:posthexsquare1}, then through Figure \ref{fig:KleinS31}.  The resulting trajectory can be isotoped rel end points to the simpler path shown in the last panel of Figure \ref{fig:KleinS33}.  Note that it can be completed to a meridian of the knot by adding an arc in $F$ from the blue dot to the aquamarine dot, puncturing $K$ exactly once.  This is what we expect from the mapping cylinder view of $M$ shown in Figure \ref{fig:tor1a}.  
 
 \begin{figure}
 \centering
    \includegraphics[scale=0.6]{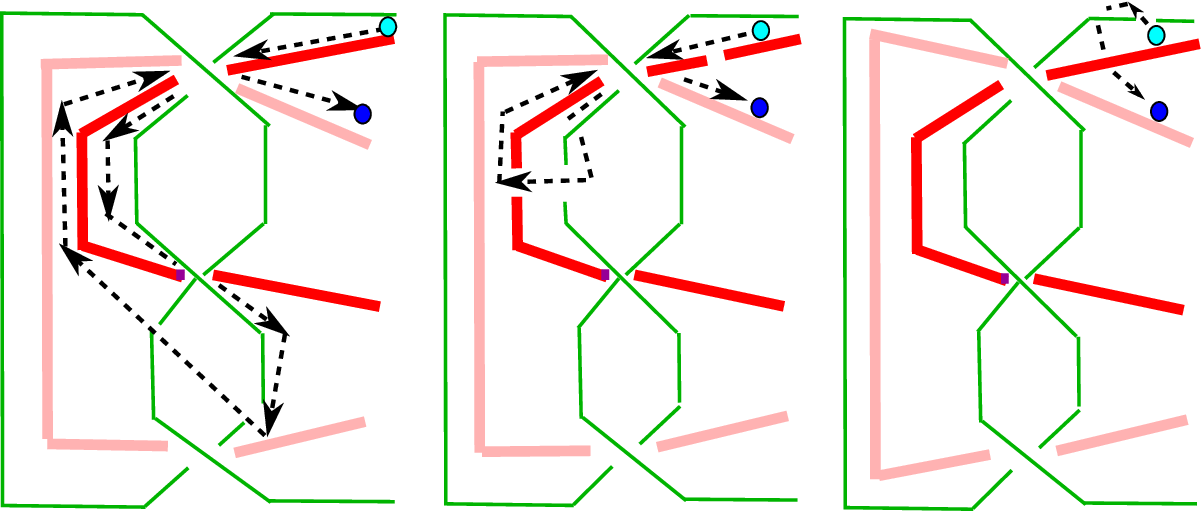}
    \caption{} \label{fig:KleinS33}
 \end{figure}
 
 The torus $T$ we seek is obtained by doing a Dehn twist of the swallow-follow torus $T_{sf}$  along $K$.  This is difficult to picture, but at least some of it can be seen directly from the figure, in particular the way in which $T$ intersects the Seifert surface $F$.  $T_{sf}$ can be thought of as the union of 
 \begin{itemize}
 \item a plane perpendicular to the plane of the figure, one that intersects  $F$ in a vertical line and is punctured twice by the knot; 
 \item and an annulus neighborhood of that half of the knot that lies to one side of the vertical plane (depicted as the right side of the vertical arc in Figure \ref{fig:TorS3a1}). 
 \end{itemize}
 Then $T$ is obtained by Dehn-twisting $T_{sf}$ along $K$.  The resulting curve $T \cap F$ is shown in Figure \ref{fig:TorS3a1}.

It is fairly straightforward now to identify within $T$ the slope along which we Dehn twist to change $V_n$ to $V_{n+1}$, as described in the introductory section.   $T$ is the mapping torus of the monodromy on $T \cap F$, and a track of that monodromy away from $F$ is shown in Figure \ref{fig:KleinS33}.  If that track is combined with an arc in $F$ connecting its endpoints, the result is a curve lying in $T$.  Applying this construction to two tracks that lie on opposite sides of $K$ in Figure \ref{fig:KleinS33} gives us two disjoint curves in $T$; these are shown in Figure \ref{fig:TorS3b1}.  In other words, the dotted circles in Figure \ref{fig:TorS3b1} are two of the the simple closed curves in $T$ that were earlier identified in Figure \ref{fig:tor1c}.  The pair of circles can then be manipulated as shown in Figure \ref{fig:TorS3b4} until they are positioned as the gray annuli we recognize from Figure \ref{fig:prehexsquare0}.  (The pair of arrows in each of the top two panels of Figure \ref{fig:TorS3b4} are meant to clarify the isotopy that moves the pair of curves to the position in the next panel.)  
  
  \begin{figure}
 \centering
    \includegraphics[scale=0.5]{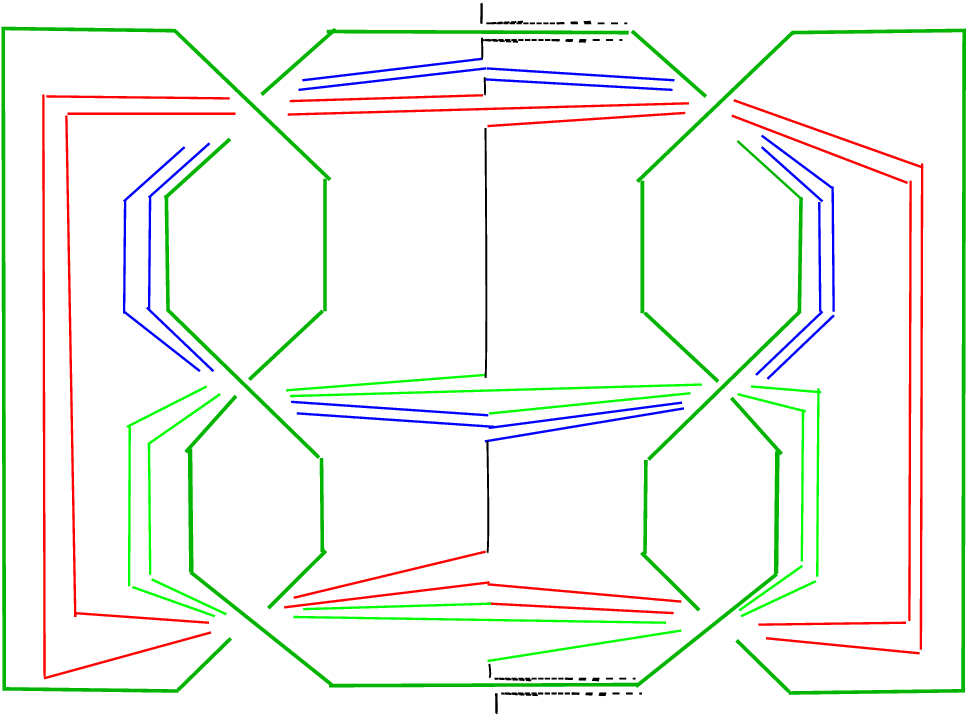}
    \caption{} \label{fig:TorS3a1}
 \end{figure}
 
   \begin{figure}
 \centering
    \includegraphics[scale=0.5]{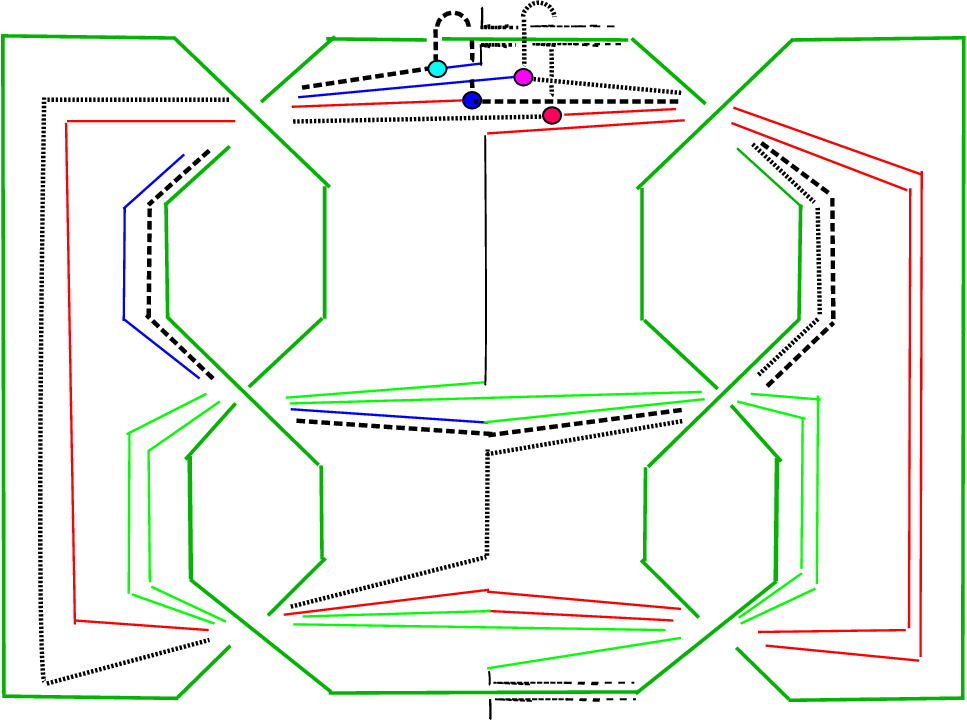}
    \caption{} \label{fig:TorS3b1}
 \end{figure}
 
    \begin{figure}
 \centering
    \includegraphics[scale=0.7]{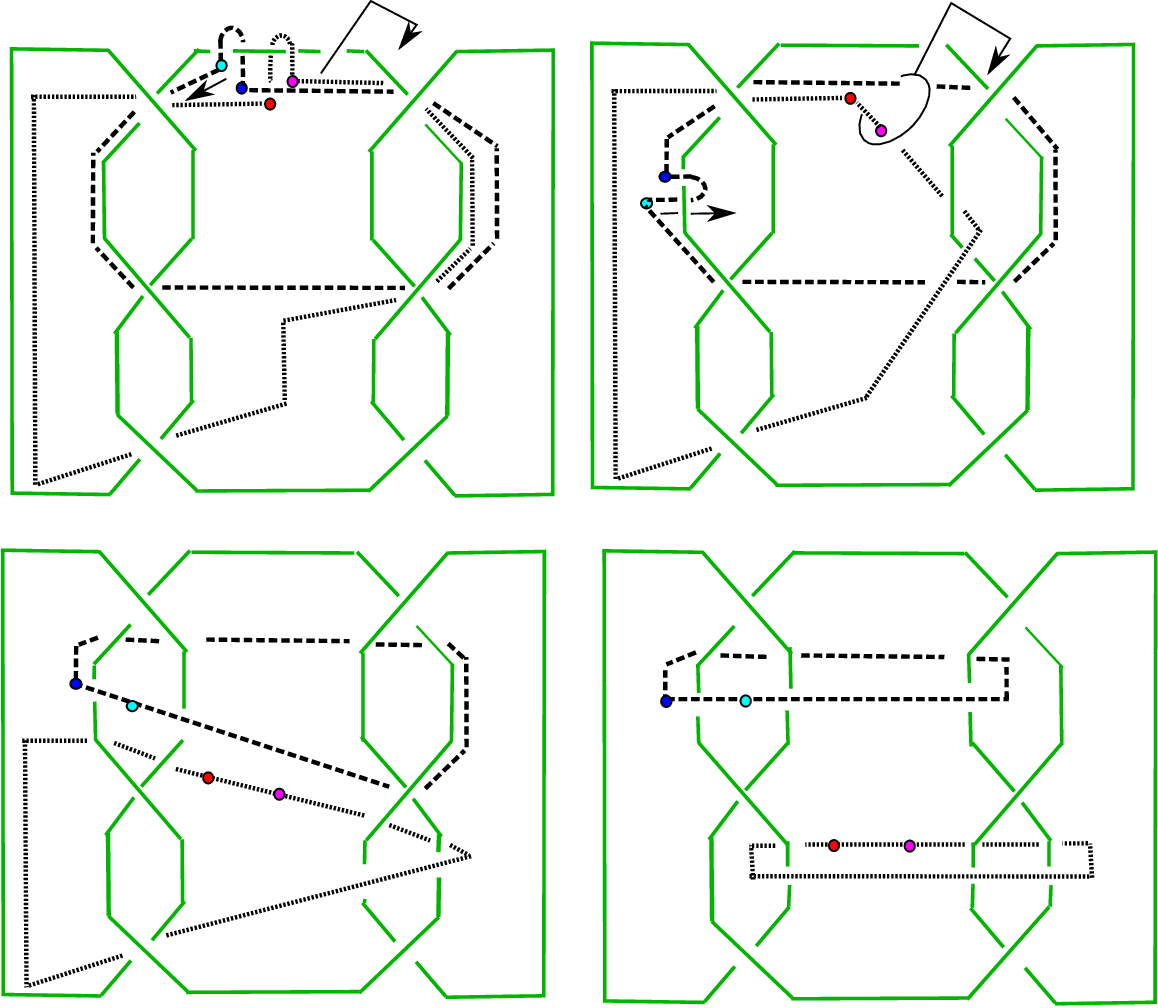}
    \caption{} \label{fig:TorS3b4}
 \end{figure}

%
%
%

\end{document}